 \newtheorem{theorem}{Theorem}%[section]
 \newtheorem{lemma}{Lemma}
 \theoremstyle{definition}
  \theoremstyle{remark}
  \newcommand{\mc}{\mathcal}
 \newcommand{\C}{\mathbb{C}}
 \newcommand{\R}{\mathbb{R}}
 \newcommand{\N}{\mathbb{N}}
 \newcommand{\Z}{\mathbb{Z}}
\numberwithin{equation}{section}
 \def\today{\ifcase\month\or
  January\or February\or March\or April\or May\or June\or
  July\or August\or September\or October\or November\or December\fi
  \space\number\day, \number\year}
\title{Extremal Signatures}
\author{Friedrich Littmann \and Mark Spanier}
\date{\today}
\begin{document}

\begin{abstract} Let $E= A-iB$ be a Hermite-Biehler entire function of exponential type $\tau/2$ where $A$ and $B$ are real entire, and consider $d\mu(x) = dx/|E(x)|^2$. We show that the sign of the product $A B$ is an extremal signature for the space of functions of exponential type $\tau$ with respect to the norm of $L^1(\mu)$. This allows us to find best approximations  by entire functions of exponential type $\tau$  in $L^1(\mu)$-norm to certain special functions (e.g., the Gaussian and the Poisson kernel).  
\end{abstract}

\keywords{Best approximation \and extremal signature \and bandlimited function \and exponential type \and Hardy space \and Hermite-Biehler function}

\subjclass[2000]{30D10 \and 30D55 \and 42A10}

\maketitle
%%
%%  Introduction
%%

\section{Introduction}

 An entire function $F$ is said to be of \textsl{exponential type} $\tau\ge 0$ if
\[
|F(z)|\lesssim e^{(\tau+\varepsilon)|z|}
\]
for every $\varepsilon>0$ and $z\in\C$. (The implied constant may depend on $\varepsilon$, but not $z$.)  We denote by $\mc{B}(\tau)$ the class of entire functions of exponential type $\tau$.  For a Borel measure $\mu$ on $\R$ and $1\le p\le \infty$ we define
\[
\mc{B}_p(\mu,\tau) =\mc{B}(\tau)\cap L^p(\mu).
\]

We write $\mc{B}_p(\tau)$ if $\mu$ is Lebesgue measure.  
%%
%% Introduction
%%

\medskip

Let $\tau>0$ and $\mu$ a Borel measure on $\R$. We are interested in {\it high pass functions for } $\mc{B}_1(\mu,\tau)$, that is, bounded, $\mu$-measurable $\psi:\R\to \C$ such that
\begin{align}\label{intro-es}
\int_\R F(x) \psi(x) d\mu(x) =0
\end{align}
for all $F\in \mc{B}_1(\mu,\tau)$. We denote by $\mc{A}(\mu,\tau)$ the class of high pass functions for $\mc{B}_1(\mu,\tau)$. (We use the letter $\mc{A}$ since these functions correspond to the class of integration functionals on $L^1(\mu)$ that annihilate $\mc{B}_1(\mu,\tau)$.) Of particular interest to us is the subclass
\[
\mc{S}(\mu,\tau) = \{\psi\in\mc{A}(\mu,\tau) : |\psi|=1 \text{ $\mu$-a.e.}\}.
\]

 The elements of $\mc{S}(\mu,\tau)$ will be called \textsl{extremal signatures for }$\mc{B}_1(\mu,\tau)$ or simply extremal signatures, if measure and type are clear from the context. The following connection between extremal signatures and best approximation is well known and explains why the study of high pass signatures is relevant. For $w=re^{i\theta}$ with $r>0$ and $0\le \theta<2\pi$ we define $\text{sgn}(w) = e^{i\theta}$, and we set $\text{sgn}(0) =0$. 
 
\medskip

\noindent{\bf Theorem A} \cite[Theorem 1.7, $6^\circ$]{Sing}. {\it
Let $f\in L^1(\mu)$, and define for $F\in \mc{B}_1(\mu,\tau)$ a function $\psi= \psi_{f,F}\in L^\infty(\R)$ by 
\[
\psi(x) = \overline{{\rm sgn}}(f(x) - F(x)).
\]

If $\psi\in \mc{S}(\mu,\tau)$, then 
\[
\|f - F\|_1\le \|f-G\|_1
\]
for all $G\in \mc{B}_1(\mu,\tau)$. (We say in this case that $F$ is a best approximation to $f$ in $L^1(\mu)$-norm.)
}

\medskip

The measures $\mu$ that we consider are defined below in \eqref{intro-mu} and the extremal signatures are given in \eqref{intro-psi}.

By way of motivating our results we describe briefly the classical case (essentially due to M.G.\ Krein \cite{Krein2}) when $\mu$ is the Lebesgue measure. In this case, extremal signatures have an equivalent formulation in terms of the distributional Fourier transform $\widehat{\psi}$ of $\psi$. By the Paley-Wiener theorem 
\[
\mc{B}_1(\tau) = \{F\in L^1(\R): \widehat{F}(t)=0\text{ for }|t|\ge \tau/(2\pi)\}.
\]

Equation \eqref{intro-es} then immediately implies that $F$ is a best approximation to $f$ in $L^1(\R)$ if and only if $\widehat{\psi}$ is supported in $\R\backslash (-\frac{\tau}{2\pi}, \frac{\tau}{2\pi})$. One such signature is  $\psi_0$ defined by
\begin{align}\label{intro-sine-sig}
\psi_0(x) = \text{sgn}\sin \tau x.
\end{align}

It can be seen from the Fourier series expansion of $\psi_0$ that $\psi_0$ is an extremal signature for $\mc{B}_1(\tau)$. This leads to the classical constructive approach of finding best approximations for $f$: a candidate for the best approximation to $f$ can be obtained by considering an element $F\in \mc{B}_1(\tau)$ that satisfies $f(x_0+\pi n/\tau) = F(x_0+\pi n/\tau)$ for some $x_0\in\R$ and all $n\in\Z$. If $f-F$ has sign changes at the values $x = x_0+\pi n/\tau$ and nowhere else on the real line, then by the previous discussion $F$ will be a best approximation to $f$. This construction succeeds for many elementary functions, e.g., the Gaussian  $G_\lambda(x) = e^{-\lambda x^2}$, the kernel $g_\lambda(x) = e^{-\lambda |x|}$, or the Poisson kernel $P_\lambda(x) = \frac{\lambda}{\pi} (x^2 + \lambda^2)^{-1}$. These cases are contained in the classical theorem of B.\ Sz.-Nagy \cite{N}, see also the articles by O.L.\ Vinogradov \cite{Vi} and  M.\ Ganzburg \cite{Ga}. 

 We  are interested in extremal signatures that play an analogous role  for $\mc{B}_1(\mu,\tau)$ as the signature $\psi_0$ in \eqref{intro-sine-sig} plays for $\mc{B}_1(\tau)$. (The starting point of our investigation was interest in the situation for the measure  $dx/(1+x^2)$.) Thus, in addition to an extremal signature we need an interpolation method that creates entire approximations so that the difference of approximation and function has sign changes only at the sign changes of the signature. 
 
Recent investigations \cite{CL}, \cite{CL2} into the Beurling-Selberg extremal problem (essentially the problem of best onesided $L^1(\mu)$-approximation) utilized an interpolation method with the following properties. For given  $f$ (e.g., the Gaussian or the Poisson kernel) and a Laguerre-P\'olya entire function $L$, the interpolation method creates under mild conditions an entire function $G= G_{f,L}$ such that
 \[
 f(x) - G(x) = L(x) H(x)
 \]
 on the real line, where $H$ is of one sign on $\R$ and $H(x) \lesssim (1+x^2)^{-1}$. The signature of $f-G$ is therefore completely determined by $L$, and the growth estimate for $H$ together with the requirement that $f-G\in L^1(\mu)$ gives an integrability condition that $L$ needs to satisfy.  
 
 In order to obtain the analogues for \eqref{intro-sine-sig}, we are therefore led to seek for a Laguerre-P\'olya entire function $L$ of exponential type $\tau$ with the following properties. 
  \begin{enumerate}
 \item The sign of $L$ is an element of $\mc{S}(\mu,\tau)$,
 \item the function $L$ satisfies
 \[
 \int_\R \frac{L(x)}{1+x^2} d\mu(x)<\infty.
 \]
 \end{enumerate}

We succeed at this task for measures $\mu= \mu_M$ given by
\begin{align}\label{intro-mu}
\mu_M(A) = \int_A \frac{dx}{M(x)},
\end{align}
where $M$ is an entire function of exponential type $\tau_0\ge 0$ that is positive on the real line and can be written as the quotient of two bounded analytic functions in the upper half plane. (Measures of this form occur in a theory developed by L.\ de Branges \cite{B}; Section \ref{dBspace} includes  summary of the required notions of de Branges' theory of Hilbert spaces of entire functions.)

The number $\tau_0$ associated with \eqref{intro-mu} is fixed, and the following results hold for all $\tau\ge \tau_0$.  We construct in  Section \ref{es-section} an associated entire function $E_{\tau/2}$ of exponential type $\tau/2$ with the property that
\begin{align}\label{meas-intro}
\int_\R \frac{F(x)}{M(x)} dx = \int_\R \frac{F(x)}{|E_{\tau/2}(x)|^2 }dx
\end{align}
for all $F \in L^2(\mu_M)$. Writing $A_{\tau/2}$ and $B_{\tau/2}$ for the unique real entire functions such that $E_{\tau/2} = A_{\tau/2} - iB_{\tau/2}$, we prove in Theorem \ref{es-theorem} that $\psi$ defined by
\begin{align}\label{intro-psi}
\psi(x) = {\rm sgn} (A_{\tau/2}(x) B_{\tau/2}(x))
\end{align}
is  an extremal signature for $\mc{B}_1(\mu_M,\tau)$. Furthermore, we indicate in Section \ref{LP-section} that the interpolation method mentioned above allows for any $f\in \{G_\lambda, g_\lambda, P_\lambda\}$ the construction of  $F_f\in\mc{B}_1(\mu_M, \tau)$ such that
\[
{\rm sgn}(F_f-f) = {\rm sgn}(A_{\tau/2} B_{\tau/2}).
\]

 An appeal to Theorem A implies that $F_f$ must necessarily be a best approximation from $\mc{B}_1(\tau,\mu)$ to $f$ in $L^1(\mu)$-norm. In this sense \eqref{intro-psi} is the correct generalization of \eqref{intro-sine-sig}. For the Poisson kernel and the conjugate Poisson kernel we obtain in addition in Theorem \ref{poisson-approx} a fairly explicit representation of the approximation error in terms of $E_{\tau/2}$. 

We note that  if $M\equiv 1$, then the construction in Section \ref{es-section} gives $E_{\tau/2}(z) = e^{-i(\tau/2) z}$. Inserting this in Theorem \ref{es-theorem} leads to $\psi(x) = {\rm sgn}(\cos(\tau x/2)  \sin(\tau x/2) ) = \psi_0(x)$, hence the classical results for Lebesgue measure are recovered. 

 It is crucial for our method that $d\mu_M = dx/|E_{\tau/2}|^2$. Requiring \eqref{meas-intro} only for all $F$ in the de Branges space $\mc{H}(E_{\tau/2})$ would not suffice. This is the reason why we can handle only measures of the form \eqref{intro-mu}. It is worth emphasizing that in this case the structure of the $L^2(\mu)$-space for exponential type $\tau/2$ determines the form of the best approximation of exponential type $\tau$ in $L^1(\mu)$.

We mention finally that the notion of an extremal signature occurs also in $L^\infty$-approximation problems. The theory related to this type of signature differs significantly from $L^1$ signatures, and we do not touch on this here. For readers interested in this direction we refer to A.\ Eremenko and P.\ Yuditskii \cite{EY} and Y.\ Xu \cite{X}.

%%
%%  Auxiliary results in de Branges spaces
%%

\section{Entire functions of bounded type}\label{dBspace}

In this section we review known facts about the Nevanlinna class, Hardy spaces, and de Branges spaces. (Readers familiar with this material may safely skip this section.)  For convenience we include sketches of the shorter proofs and references to the longer proofs. Sources for this section are the books by M.\ Rosenblum and J.\ Rovnyak \cite{MR}, J.\ Levin \cite{Le}, and L.\ de Branges \cite{B}.

For any entire function $F$ we use the notation $F^*(z) = \overline{F(\overline{z})}$.  For nonnegative $u$ we set $\log^+(u) = \max(\log u,0)$. An analytic function $F$ defined in the upper half plane $\C^+$ is said to have bounded type, if $F$ is the quotient of two functions that are analytic and bounded in $\C^+$, and the denominator function is non-zero in $\C^+$. (This is sometimes called the Nevanlinna class of $\C^+$.) By \cite[Theorem 6.13]{MR} the function $F$ has the factorization 
\begin{align}\label{Ne-fac}
F(z) = e^{-i\nu z} B(z)\frac{S_+(z)}{S_-(z)} G(z)
\end{align}
in $\C^+$, where $\nu$ is real, $B$ is a Blaschke product, $S_+$, and $S_-$ are exponentials of Herglotz transforms of singular positive measures, and $G$ is an outer function. The number $\nu = \nu(F)$ is called the mean type of $F$ in $\C^+$; one of the formulas for its calculation is 
\begin{align}\label{type-calc}
\nu(F) = \limsup_{y\to \infty} y^{-1} \log|F(iy)|. 
\end{align}

By a classical theorem of M.G.\ Krein there is a close connection between the mean types of $F$ in the upper and lower half plane (the latter being also $\nu(F^*)$) and exponential type.

\begin{lemma}\label{krein-lemma} Let $F:\C\to \C$ be an entire function. The following are equivalent.
\begin{enumerate}
\item $F$ and $F^*$ have bounded type in the upper half plane. 
\item $F$ has exponential type and
\begin{align}\label{krein-ineq}
\int_\R \frac{\log^+|F(x)|}{1+x^2} dx <\infty.
\end{align}

If either (and therefore both) of these conditions hold, then the exponential type $\tau$ of $F$ satisfies $\tau\le \max(\nu(F), \nu(F^*))$.
\end{enumerate}
\end{lemma}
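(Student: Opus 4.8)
The plan is to prove the two implications separately, using the Nevanlinna factorization \eqref{Ne-fac} and the Poisson integral as the principal tools, with a Phragm\'en--Lindel\"of argument in a half-plane to transfer growth information between the two half-planes and across $\R$. Throughout I write $z=x+iy$ and record that $|F^*(x)|=|F(x)|$ for real $x$, so the boundary modulus appearing below is common to $F$ and $F^*$.

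For the implication $(1)\Rightarrow(2)$, the integral \eqref{krein-ineq} is immediate from the boundary theory of the Nevanlinna class applied to $F$ (and is symmetric in $F$ and $F^*$ because $|F|=|F^*|$ on $\R$): a function of bounded type in $\C^+$ has nontangential boundary values whose logarithm is integrable against $dx/(1+x^2)$. For the exponential type I would use \eqref{Ne-fac} together with \eqref{type-calc} to obtain, in $\C^+$, a majorization of the shape
\[
\log|F(x+iy)|\le \nu(F)\,y+\frac{y}{\pi}\int_\R \frac{\log|F(t)|}{(x-t)^2+y^2}\,dt,
\]
in which the exponential factor $e^{-i\nu z}$ and the reduced Blaschke and singular inner factors are accounted for by the mean-type term $\nu(F)\,y$, while the outer factor supplies the Poisson integral of the boundary data $\log|F|$. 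Because of \eqref{krein-ineq} the Poisson term is $o(|z|)$ as $|z|\to\infty$ in $\C^+$, so $|F(z)|\lesssim e^{(\nu(F)+\varepsilon)|z|}$ there for every $\varepsilon>0$; applying the same argument to $F^*$ bounds $F$ in $\C^-$ by $e^{(\nu(F^*)+\varepsilon)|z|}$. A Phragm\'en--Lindel\"of argument then promotes these half-plane bounds to a global estimate, giving both that $F$ is of exponential type and the asserted inequality $\tau\le\max(\nu(F),\nu(F^*))$.

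For $(2)\Rightarrow(1)$ it suffices, by the definition of bounded type, to exhibit a harmonic majorant of $\log^+|F|$ in $\C^+$. I would take as candidate $h(z)=\tau y+P[\log^+|F|](z)$, where $P[\,\cdot\,]$ denotes the Poisson extension; this is finite and harmonic in $\C^+$ precisely because of \eqref{krein-ineq}. The function $V=\log|F|-h$ is subharmonic in $\C^+$, and on the boundary $V(x)=\log|F(x)|-\log^+|F(x)|\le 0$. To conclude $V\le 0$ throughout $\C^+$ I would invoke Phragm\'en--Lindel\"of for a half-plane: the exponential-type hypothesis bounds $\log|F(z)|$ by $(\tau+\varepsilon)|z|$, while $h(z)\ge \tau y$ and the Poisson term grows only like $o(|z|)$, so the growth of $V$ is subcritical for the half-plane maximum principle. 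This yields $\log^+|F|\le h$ in $\C^+$, so $F$ is of bounded type there, and the identical argument applied to $F^*$ completes the implication.

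I expect the main obstacle to be the quantitative control of the Poisson integral $P[\log^+|F|]$: one must show that \eqref{krein-ineq} forces this integral to grow like $o(|z|)$ along rays into the open half-plane while still reproducing $\log^+|F|$ as nontangential boundary values, and then run the half-plane Phragm\'en--Lindel\"of comparison against the linearly growing competitor $\tau y$. Both the $o(|z|)$ estimate and the sharp form of the type inequality hinge on a Lindel\"of-type analysis showing that the indicator of $F$ cannot exceed $\max(\nu(F),\nu(F^*))\,|\sin\theta|$, that is, that the logarithmic-integral condition suppresses growth along the real axis. I anticipate that verifying the comparison functions dominate in the unbounded half-plane, rather than any single algebraic identity, will be the delicate point.
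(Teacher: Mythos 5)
The paper itself does not prove this lemma---it simply cites Krein's 1947 paper and Rosenblum--Rovnyak, Theorem 6.17---so your attempt must be measured against the classical proofs rather than against anything in the text. The central gap is in your direction $(1)\Rightarrow(2)$. The Poisson-type majorization you extract from \eqref{Ne-fac} bounds $\log|F|$ by $\nu(F)\,y+P[\log|F|](z)$, and you assert that the Poisson term is $o(|z|)$ ``as $|z|\to\infty$ in $\C^+$.'' That estimate is true only nontangentially, i.e.\ along rays $\arg z=\theta$ with $\theta$ fixed in $(0,\pi)$ or in sectors $\delta\le\arg z\le\pi-\delta$; near the real axis the Poisson integral merely reproduces the boundary data, and the best uniform statement has the shape $C\bigl(y+(1+x^2)/y\bigr)$, which is not $O(|z|)$ in the sectors around $\R$. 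So the two half-plane arguments give exponential bounds only on sectors about the imaginary axis, leaving sectors about the real axis uncontrolled, and your plan to close that hole ``by a Phragm\'en--Lindel\"of argument'' is circular: P--L in a sector of opening $\pi/2$ (critical order $2$) requires an a priori growth bound of order $<2$ inside the sector, and at this stage you have no global growth bound of any kind on $F$ --- bounded type in the two open half-planes says nothing, by itself, about $|F(x+iy)|$ as $|x|\to\infty$ with $y$ bounded. (Compare functions like $e^{e^{z^2}}$ on a quarter sector: bounded on the boundary rays, wildly unbounded inside; without an a priori order restriction, boundary bounds on rays prove nothing.) Breaking exactly this circle is the real content of Krein's theorem, and the classical proofs do it with machinery absent from your sketch (Carleman's formula and zero-counting in Levin's treatment, or the harmonic-estimation argument behind Rosenblum--Rovnyak's Theorem 6.17).

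Two further points. First, the majorization itself is not justified as stated: in \eqref{Ne-fac} the singular factor $S_-$ sits in the denominator and contributes the \emph{positive} term $P[d\sigma_-]$ to $\log|F|$, which is not ``accounted for by the mean-type term'' as you claim; you must either carry it along (it satisfies the same nontangential $o(|z|)$ estimate) or prove it vanishes for entire $F$, which is itself a nontrivial Smirnov-class argument. Second, in direction $(2)\Rightarrow(1)$ your claim that the growth of $V$ is ``subcritical for the half-plane maximum principle'' is wrong: $V$ can grow like $(\tau+\varepsilon)|z|$ along nearly-real directions, which is exactly critical for a half-plane (the subharmonic function $y$ vanishes on $\R$ and is positive inside, so no maximum principle can hold at this growth rate without refinement). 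This direction is nevertheless repairable, because here you do possess the a priori bound $|F(z)|\le C_\varepsilon e^{(\tau+\varepsilon)|z|}$: for instance, compare $F(z)e^{i\tau' z}$ with $\tau'>\tau$ against the outer function with boundary modulus $\max(|F(x)|,1)$, and apply P--L in the two quarter-planes, where order-one growth \emph{is} subcritical. So $(2)\Rightarrow(1)$ is a fixable sketch, but $(1)\Rightarrow(2)$ as written assumes the theorem's essential difficulty rather than proving it.
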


\begin{proof} This was shown first by Krein in \cite{Krein}; see also \cite[Theorem 6.17]{MR}.
\end{proof}

We denote by $\mc{C}$ the class of entire functions satisfying either (and hence both) of the conditions in Lemma \ref{krein-lemma}. We require the following decomposition result essentially due to N.\ Akhiezer.

\begin{lemma}\label{akhiezer} Let $F\in\mc{C}$ satisfy $F(x)\ge 0$ for all $x\in\R$. Denote by $\tau$ the exponential type of $F$. Then there exists an entire function $U\in \mc{C}$ of exponential type $\tau/2$ having zeros only in the closed lower half plane such that $F = UU^*$. 
\end{lemma}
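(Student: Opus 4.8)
The plan is to reduce to the case that $F$ is real entire, split the zeros of $F$ into a ``lower half'' multiset, assemble a canonical product over it, and then correct by an exponential factor so that the resulting $U$ has type exactly $\tau/2$ and satisfies $UU^*=F$. First I would observe that $F$ is real entire: since $F(x)\ge 0$ for real $x$, we have $F^*(x)=\overline{F(\overline{x})}=\overline{F(x)}=F(x)$ on $\R$, so $F\equiv F^*$. Hence the zero set of $F$ is symmetric about $\R$, the non-real zeros occur in conjugate pairs, and each real zero has even multiplicity since $F$ does not change sign. I would also record that $\nu(F)=\tau$: the bound $\nu(F)\le\tau$ is immediate from \eqref{type-calc} and the definition of exponential type, while Lemma \ref{krein-lemma} together with $\nu(F)=\nu(F^*)$ gives $\tau\le\max(\nu(F),\nu(F^*))=\nu(F)$.

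Next I would construct $U$. Let $Q$ be the genus $\le 1$ canonical product whose zeros are the zeros of $F$ in the open lower half plane (with their multiplicities) together with each real zero of $F$ taken with half its multiplicity. Since $F$ has order $\le 1$ and, by finiteness of its type, finite zero density, this sub-multiset has finite convergence exponent and finite density, so $Q$ is entire of finite exponential type with zeros only in the closed lower half plane. By construction $Q$ and $Q^*$ together have exactly the zeros of $F$, so $F/(QQ^*)$ is a zero-free entire function of order $\le 1$, hence equal to $e^{c_0+c_1 z}$ for constants $c_0,c_1$; positivity of $F$ and of $QQ^*=|Q|^2$ on $\R$ forces $c_0,c_1\in\R$. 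I would then set
\[
U(z) = e^{(c_0+c_1 z)/2}\, e^{-iaz}\, Q(z)
\]
for a real parameter $a$ to be chosen. Because $c_0,c_1,a$ are real one checks $UU^*=e^{c_0+c_1z}QQ^*=F$, and $U$ retains its zeros only in the closed lower half plane.

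It remains to pick $a$ and verify the type and membership in $\mc{C}$. On $\R$ we have $|U(x)|^2=U(x)U^*(x)=F(x)$, so
\[
\int_\R\frac{\log^+|U(x)|}{1+x^2}\,dx=\tfrac12\int_\R\frac{\log^+|F(x)|}{1+x^2}\,dx<\infty
\]
by \eqref{krein-ineq}; as $U$ is entire of finite exponential type, Lemma \ref{krein-lemma} gives $U\in\mc{C}$, and its exponential type equals $\max(\nu(U),\nu(U^*))$. Writing $q_+=\nu(Q)$, $q_-=\nu(Q^*)$ and using additivity of mean type over products of bounded type functions, the real factor $e^{(c_0+c_1z)/2}$ contributes $0$ to each mean type while $e^{-iaz}$ contributes $+a$ to $\nu(U)$ and $-a$ to $\nu(U^*)$, so that $\nu(U)=a+q_+$ and $\nu(U^*)=-a+q_-$ with $q_++q_-=\nu(F)=\tau$. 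Choosing $a=(q_--q_+)/2$ equalizes the two mean types and yields exponential type $\max(\nu(U),\nu(U^*))=\tfrac12(q_++q_-)=\tau/2$, as required.

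The main obstacle is the mean-type bookkeeping of the last paragraph: one must justify that mean type is genuinely additive on products within $\mc{C}$ (so that the balancing parameter $a$ gives type \emph{exactly} $\tau/2$, not merely an upper bound) and that $Q$ has finite type with the stated mean types $q_\pm$. These rest on the Nevanlinna factorization \eqref{Ne-fac} and the identification of $\nu$ as the exponent of the leading exponential factor; once they are in place, the remaining checks are routine. Alternatively the construction can be phrased directly through \eqref{Ne-fac}, taking square roots of the zero-free outer and singular factors in $\C^+$ and pairing the Blaschke zeros with their conjugates, in which case the same obstacle reappears as the problem of matching branches across $\R$ so that the two half-plane definitions glue to a single entire function.
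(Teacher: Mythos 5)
Your construction is, in outline, the standard proof of Akhiezer's factorization theorem, which the paper does not reprove but simply cites (together with Cartwright's theorem) from Levin's book; so the architecture is reasonable. However, there is a genuine gap at the decisive analytic step: you claim that because the chosen sub-multiset of zeros (the open-lower-half-plane zeros plus half of each real zero) has ``finite convergence exponent and finite density'', the genus $\le 1$ canonical product $Q$ over it is entire of finite exponential type. Finite density does \emph{not} imply finite type for a genus-one canonical product. The standard counterexample is the product over the positive integers, $\prod_{n\ge 1}(1-z/n)e^{z/n} = -e^{\gamma z}/\bigl(z\Gamma(-z)\bigr)$, whose zeros have density $1$ but which has order one and maximal (infinite) type, since $\log|1/\Gamma(-x)|$ grows like $x\log x$ as $x\to+\infty$ away from the zeros. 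By Lindel\"of's theorem, finite type for a genus-one product requires, in addition to $n(r)=O(r)$, boundedness of the partial sums $\sum_{|a_n|\le r} a_n^{-1}$, and for your \emph{asymmetric} half zero-set this is exactly what must be proved. It does not follow from generalities about order; one needs Cartwright's theorem, namely that the zeros of $F\in\mc{C}$ satisfy $\sum_k |\Im(1/z_k)|<\infty$. Granting that, the imaginary parts of your partial sums are absolutely bounded, while the real parts equal one half of the corresponding full sums (conjugate zeros have equal real parts of reciprocals, and conjugate pairs are never split by the cutoff $|z_k|\le r$), and the full sums are bounded by Lindel\"of applied to $F$ itself. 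This is precisely the ingredient the paper's one-line proof invokes via the class $\mc{A}$.

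Your closing paragraph does flag that the finite type of $Q$ and the values $q_\pm$ need justification, but you attribute this to the Nevanlinna factorization \eqref{Ne-fac}; that factorization governs functions already known to be of bounded type in a half plane and cannot by itself control the growth of a canonical product built from an asymmetric half of the zero set --- the issue is zero-distribution theory (Lindel\"of and Cartwright), not half-plane factorization. Once $Q$ (hence $U$) is known to have finite exponential type, the rest of your argument --- $UU^*=F$, the transfer of \eqref{krein-ineq} via $|U(x)|^2=F(x)$, and the mean-type balancing with $a=(q_--q_+)/2$ --- is sound, with one small repair: Lemma \ref{krein-lemma} as stated only gives type $\le\max(\nu(U),\nu(U^*))$, so to conclude the type is exactly $\tau/2$ you should add the trivial lower bound: $U$ and $U^*$ have equal type, their product $F$ has type $\tau$, and type is subadditive under products, whence the type of $U$ is at least $\tau/2$.
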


\begin{proof} Denote by $\mc{A}$ the class of entire functions $G$ such that
\[
\sum_{k=1}^\infty \left| \Im \frac{1}{z_k} \right| <\infty,
\]
where $z_k$ denotes the nonzero zeros of $G$ (listed with multiplicity). By a theorem of Cartwright an entire function $F\in\mc{C}$ of exponential type is in $\mc{A}$, and by a theorem of Akhiezer  a nonnegative function $F\in\mc{A}$ of exponential type $\tau$ can be written as $F = UU^*$ where $U$ is an entire function of exponential type $\tau/2$ with zeros in $\Im z\le 0$. Since $F$ satisfies \eqref{krein-ineq}, $U$ satisfies \eqref{krein-ineq} as well.  (See, e.g., Theorem 7 on page 243 and Theorem 1 on page 437 of \cite{Le}.)
\end{proof}

 An entire function $E$ satisfying the inequality
\begin{align}\label{HB}
|E(z)|>|E^*(z)|
\end{align}
for all $z$ with $\Im z >0$ will be called a \textsl{Hermite-Biehler} (HB) function. We denote by $\mc{H}(E)$  the vector space of entire functions $F$ such that
\begin{align}\label{H2E}
\int_{-\infty}^\infty |F(x)/E(x)|^2 dx <\infty
\end{align}
and the functions $F/E$ and $F^*/E$ have bounded type and nonpositive mean type in $\C^+$. It is a Hilbert space with scalar product
\[
\langle F, G \rangle_E = \int_{-\infty}^\infty F(x) G^*(x) |E(x)|^{-2} dx.
\]

 We define $A = (1/2)(E+E^*)$ and $B = (i/2)(E - E^*)$. A fundamental result of de Branges from the 1960's is the recognition that this space is a reproducing kernel Hilbert space; we briefly sketch the argument. It follows from \eqref{HB} that $E$ has no zeros in the open upper half plane, and it follows from the definition of $\mc{H}(E)$ that $F/E$ and $F^*/E$ have no zeros in an open set containing the closed upper half plane. The condition that $F/E$ and $F^*/E$ have non-positive mean type implies that the Cauchy integral formula for the upper half plane holds for $F/E$ and $F^*/E$ (e.g., \cite[Theorem 12]{B}, note also that Cauchy formulas for $F/E^*$ and $F^*/E^*$ in the lower half plane hold), and it follows with an elementary residue calculation that $\mc{H}(E)$ has the reproducing kernel $K$ given by
\begin{align}\label{K-from-AB}
K(w,z) = \frac{B(z) A(\bar{w}) - A(z) B(\bar{w})}{\pi(z-\bar{w})}
\end{align}
for $z\neq \bar{w}$.

We denote by $\mc{H}^p(E)$ the vector space that is obtained by replacing the $L^2$-norm in \eqref{H2E} by the $L^p$-norm. (We require $p=1$ and $p=2$.)

The above definitions can be recast in the language of Hardy spaces. We recall that the Hardy space $H^p(\C^+)$ is the space of functions $F$ that are analytic in $\C^+$ such that the horizontal norms $\|F(.+iy)\|_p$ are uniformly bounded for $y>0$. In particular, $H^\infty(\C^+)$ is the space of bounded analytic functions in the upper half plane.

\begin{lemma}\label{HpE-Lp-equivalent} Let $E$ be a HB function that has bounded type in $\C^+$, and let $F$ be an entire function. For $1\le p<\infty$ the following conditions are equivalent.
\begin{enumerate}
\item $F$ belongs to $\mc{H}^p(E)$,
\item $F$ has exponential type, $\max(\nu(F), \nu(F^*))\le \nu(E)$, $F/E\in L^p(\R)$, and
\[
\int_\R \frac{\log^+|F(x)|}{1+x^2} dx <\infty,
\]
\item $F$ has exponential type $\tau\le \nu(E)$ and $F/E\in L^p(\R)$.
\end{enumerate}
\end{lemma}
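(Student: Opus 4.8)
The plan is to prove the three conditions equivalent by running the cycle $(1)\Rightarrow(2)\Rightarrow(3)\Rightarrow(1)$, using Lemma \ref{krein-lemma} (Krein's theorem) and the arithmetic of mean type as the two main engines. Throughout I would rely on two facts: first, that for a function of bounded type the $\limsup$ in \eqref{type-calc} is a genuine limit, so that mean type is additive under products and quotients, $\nu(FG)=\nu(F)+\nu(G)$ and $\nu(F/G)=\nu(F)-\nu(G)$ whenever all factors have bounded type; and second, that an entire function of exponential type $\tau$ satisfies $\nu(F)\le\tau$ and $\nu(F^*)\le\tau$ directly from \eqref{type-calc}.

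The two short directions go as follows. For $(1)\Rightarrow(2)$, if $F\in\mc{H}^p(E)$ then $F/E$ and $F^*/E$ have bounded type in $\C^+$; multiplying by $E$, which has bounded type by hypothesis, shows that $F=(F/E)E$ and $F^*=(F^*/E)E$ have bounded type, so Lemma \ref{krein-lemma} gives that $F$ has exponential type and satisfies \eqref{krein-ineq}. The nonpositivity of the mean types of $F/E$ and $F^*/E$ translates via additivity into $\nu(F)=\nu(F/E)+\nu(E)\le\nu(E)$ and likewise $\nu(F^*)\le\nu(E)$, which is $(2)$. For $(2)\Rightarrow(3)$, the log-integral bound together with exponential type lets me invoke the final clause of Lemma \ref{krein-lemma}, yielding exponential type $\tau\le\max(\nu(F),\nu(F^*))\le\nu(E)$; the condition $F/E\in L^p(\R)$ is carried over unchanged.

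The substantive direction, and where I expect the main obstacle, is $(3)\Rightarrow(1)$: starting only from exponential type $\tau\le\nu(E)$ and $F/E\in L^p(\R)$, I must recover full membership in $\mc{H}^p(E)$, i.e.\ bounded type and nonpositive mean type for both $F/E$ and $F^*/E$. The crux is producing the log-integral bound \eqref{krein-ineq} for $F$ out of the $L^p$ control on $F/E$. I would split $\log^+|F|\le\log^+|F/E|+\log^+|E|$; the elementary inequality $\log^+u\le u^p/p$ bounds $\int_\R(1+x^2)^{-1}\log^+|F/E|\,dx$ by $\|F/E\|_p^p$, and the second term is finite because $E$ itself satisfies \eqref{krein-ineq}. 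The latter point is where the Hermite–Biehler hypothesis enters: by \eqref{HB} the ratio $E^*/E$ is bounded and analytic in $\C^+$, hence of bounded type, so $E^*$ has bounded type along with $E$, and Lemma \ref{krein-lemma} applies to $E$.

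With \eqref{krein-ineq} established for $F$, Lemma \ref{krein-lemma} shows $F$ and $F^*$ have bounded type in $\C^+$, and since $1/E$ has bounded type ($E$ being of bounded type and zero-free in $\C^+$) the quotients $F/E$ and $F^*/E$ have bounded type. Finally the mean-type bookkeeping closes the argument: $\nu(F/E)=\nu(F)-\nu(E)\le\tau-\nu(E)\le0$, and analogously $\nu(F^*/E)\le0$, so both quotients have nonpositive mean type and $F\in\mc{H}^p(E)$. The delicate point to handle carefully is the additivity of mean type under the relevant products and quotients, which is legitimate precisely because all the functions involved lie in the Nevanlinna class of $\C^+$.
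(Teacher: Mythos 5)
Your proof is correct and follows essentially the same route as the paper's (which is the Holt--Vaaler argument): the cycle $(1)\Rightarrow(2)\Rightarrow(3)\Rightarrow(1)$ driven by Lemma \ref{krein-lemma} and additivity of mean type, with a Jensen-type logarithmic estimate converting the $L^p$ control of $F/E$ into the Krein condition \eqref{krein-ineq}. The only difference is organizational: the paper carries out the Jensen estimate in the step $(1)\Rightarrow(2)$ and leaves the bounded-type verification in $(3)\Rightarrow(1)$ implicit, whereas you place that estimate in $(3)\Rightarrow(1)$, which is precisely where it is needed to invoke Lemma \ref{krein-lemma} for $F$.
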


\begin{proof} We follow J.\ Holt and J.D.\ Vaaler \cite[Lemma 12]{HV}.  The definition of $\nu(F)$ implies that $\nu(FG) = \nu(F) + \nu(G)$, and \eqref{type-calc} shows $\nu(F+G)\le \max(\nu(F), \nu(G))$.

The assumption $F\in \mc{H}^p(E)$ implies that $\nu(F/E)\le 0$. We obtain $\nu(F)= \nu(E (F/E))\le \nu(E)$, and similarly $\nu(F^*)\le \nu(E)$. Jensen's inequality gives
\[
\int_\R (1+x^2)^{-1} \log^+|F(x)/E(x)| dx\le \frac{\pi}{p} \log\left(1+\frac{1}{\pi} \|F/E\|_p^p\right)<\infty,
\]
and since $E$ satisfies \eqref{krein-ineq} by assumption, $F$ satisfies \eqref{krein-ineq} as well. This proves that (1) implies (2). Lemma \ref{krein-lemma} shows that (2) implies (3).  Since $E$ is HB, it has no zeros in the upper half plane, and hence $1/E$ is of bounded type in $\C^+$. It follows from \eqref{Ne-fac} that $\nu(1/E) = -\nu(E)$, hence (3) implies that $\nu(F/E)\le 0$, which is needed to show (1).
 \end{proof}

\begin{lemma}\label{HpE-HpC-equivalent} Let $1\le p\le \infty$, and let $E$ be a Hermite-Biehler entire function without real zeros. The following conditions are equivalent.
\begin{enumerate}
\item $F\in \mc{H}^p(E)$,
\item $F/E$ and $F^*/E$ are in $H^p(\C^+)$. 
\end{enumerate}
\end{lemma}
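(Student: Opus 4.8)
The plan is to reduce both implications to a single analytic fact: a function $G$ that is of bounded type in $\C^+$, has nonpositive mean type, is analytic across the whole real axis, and has boundary values in $L^p(\R)$ must lie in $H^p(\C^+)$. The implication $(2)\Rightarrow(1)$ is the soft one, so I would dispose of it first. If $F/E$ and $F^*/E$ lie in $H^p(\C^+)$, then they are of bounded type (every $H^p$-function factors as in \eqref{Ne-fac} with trivial singular denominator), their mean types are nonpositive, and their nontangential boundary values lie in $L^p(\R)$ with $\|F/E\|_{L^p(\R)}=\|F/E\|_{H^p}$; since $F$ is entire, this is exactly the definition of membership in $\mc{H}^p(E)$.

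For $(1)\Rightarrow(2)$, set $G=F/E$. Because $E$ is Hermite-Biehler it has no zeros in $\C^+$, and by hypothesis it has none on $\R$; since $F$ is entire, $G$ is therefore analytic in an open neighborhood of $\overline{\C^+}$, in particular across every real point. I would then apply the canonical factorization \eqref{Ne-fac} to $G$, writing $G=e^{-i\nu z}B\,S_+S_-^{-1}O$ with $\nu=\nu(G)\le 0$. The decisive step is to show that the singular inner factors $S_+$ and $S_-$ are trivial, and this is where analyticity across $\R$ enters: a nonconstant singular inner function has an essential singularity at each point of the support of its finite singular measure and cannot be continued analytically across such a point, whereas $G$ continues analytically across all of $\R$. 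Hence the finite singular measures attached to $S_+$ and $S_-$ vanish, so $S_+=S_-=1$ and $G=e^{-i\nu z}B\,O$.

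It then remains to read off $H^p$-membership from this reduced factorization. Since $\nu\le 0$ the factor $e^{-i\nu z}$ satisfies $|e^{-i\nu z}|=e^{\nu\Im z}\le 1$ in $\C^+$, the Blaschke product obeys $|B|\le 1$, and the outer factor $O$ has boundary modulus $|O|=|G|=|F/E|$ a.e.\ on $\R$; as $F/E\in L^p(\R)$ by hypothesis, the identity $\|O\|_{H^p}=\|O\|_{L^p(\R)}$ for outer functions gives $O\in H^p(\C^+)$, whence $\|G(\cdot+iy)\|_p\le\|O(\cdot+iy)\|_p\le\|O\|_{H^p}$ uniformly in $y>0$ and so $F/E\in H^p(\C^+)$. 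The function $F^*/E$ is handled identically: $F^*$ is entire, so $F^*/E$ is again analytic across $\R$, it is of bounded type with nonpositive mean type by the definition of $\mc{H}^p(E)$, and $|F^*/E|=|F/E|\in L^p(\R)$ on the real line.

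The main obstacle is precisely the vanishing of the singular inner factors, and it is here that the entirety of $F$ is indispensable rather than incidental: without it the chain ``bounded type $+$ nonpositive mean type $+$ $L^p$ boundary values'' does not force $H^p$. Indeed, if $S$ is a singular inner function of mean type $0$, then $G=1/S$ is of bounded type with $\nu(G)=0$ and unimodular (hence $L^\infty$) boundary values, yet $G\notin H^\infty(\C^+)$; such a $G$ is excluded in our setting only because $E/S$ fails to be entire. I would therefore take care to isolate and state cleanly the fact that analytic continuation across $\R$ annihilates the finite part of the singular measure in \eqref{Ne-fac}, citing the structure of the canonical factorization in \cite{MR} or \cite{Le}, as the technical heart of the argument.
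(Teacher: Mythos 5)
Your proposal is correct and follows essentially the same route as the paper: both directions rest on the canonical factorization \eqref{Ne-fac}, with the soft implication read off from the inner--outer decomposition of an $H^p$ function and the hard implication hinging on the analyticity of $F/E$ and $F^*/E$ across $\R$ (which the paper notes at the outset for exactly this purpose). The only difference is packaging: where you explicitly annihilate the singular inner factors and then invoke the $L^p$--$H^p$ norm identity for outer functions, the paper compresses the same content into a Poisson--Jensen majorization of $|F(x+iy)/E(x+iy)|$ by the Poisson integral of its boundary modulus (citing \cite[Proposition 8]{G}), an estimate that is valid precisely because the denominator singular factor is trivial --- the point you correctly isolate as the technical heart.
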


\begin{proof} The lemma is a fairly direct consequence of \eqref{Ne-fac} (e.g., \cite[Proposition 8]{G}). We note that $F/E$ and $F^*/E$ are analytic on a set containing the closure of $\C^+$. 

Assume first that $F/E \in H^p(\C^+)$. If $p=\infty$, then the quotients are bounded in $\C^+$ and hence have bounded type and non-positive mean type. For $1\le p<\infty$ we note \cite{MR} that $F/E = AG$ where $A$ is an inner function and $G$ is an outer function. Comparing with \eqref{Ne-fac} it follows in particular that the exponential $e^{-i\tau z}$ is part of the inner function, which is to say that $F/E$ has non-positive mean type. The same argument works for $F^*/E$. 

For the other direction, assume that $F/E$ has bounded type and mean type $\tau\le 0$, and $F/E\in L^p(\R)$. An estimate involving the Poisson integral of $\log |F/E|$ and \eqref{Ne-fac} leads with an application of Jensen's inequality to the inequality
\[
\left| \frac{F(x+iy)}{E(x+iy)}\right| \le \frac{y}{\pi} \int_\R \frac{|F(u)/E(u)|}{(x-u)^2+y^2} du
\]
for $y>0$, and hence
\[
\int_\R |F(x+iy)/E(x+iy)|^pdx \le \int_\R |F(x)/E(x)|^p dx
\]
 which implies that $F/E \in H^p(\C^+)$. 
\end{proof}

As is common, we denote by $\varphi:\R \to \C$ a continuous, increasing function with the property that
\[
e^{i\varphi(x)} E(x)\in \R
\]
for all real $x$. The function $\varphi$ is called a {\it phase} of $E$; it is unique up to a constant multiple of $\pi$ (cf.\ \cite[Problem 48]{B}). We will make use of the fact that $e^{2i\varphi(x)}$ is the boundary function of $E^*/E\in H^\infty(\C^+)$.

Assume that $E$ has no real zeros. Since the phase of $\varphi$ is monotonically increasing, it follows that $A$ and $B$ have only real zeros, their zeros are simple, and they interlace. In particular, these functions have no common zeros. Hence, ${\rm sgn}(A B)$ has sign changes located exactly at the simple zeros of $A B$. It will be important below that $AB/|E^2|$ is bounded on the real line.  

For easy reference we record the following classical fact from Hardy space theory.

\begin{lemma}\label{H1-mean} If $F\in H^1(\C^+)$, then 
\[
\int_\R F(x) dx =0.
\]
\end{lemma}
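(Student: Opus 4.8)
The plan is to use the Fourier-analytic description of $H^1(\C^+)$. First I would recall that a function $F\in H^1(\C^+)$ has boundary values in $L^1(\R)$, so that by the Riemann--Lebesgue lemma its Fourier transform $\widehat{F}$, normalized by $\widehat{F}(t)=\int_\R F(x)e^{-ixt}\,dx$, is a continuous function on $\R$ vanishing at infinity. The quantity we wish to evaluate is exactly $\widehat{F}(0)=\int_\R F(x)\,dx$, so the task reduces to showing $\widehat{F}(0)=0$.

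The decisive input is that the analytic extension of $F$ to $\C^+$ together with its integrability forces $\widehat{F}$ to be supported in $[0,\infty)$; this is the $L^1$-analogue of the Paley--Wiener description and is standard (see \cite{MR}). Granting it, the continuity of $\widehat{F}$ at the origin combined with $\widehat{F}\equiv 0$ on $(-\infty,0)$ yields $\widehat{F}(0)=\lim_{t\to 0^-}\widehat{F}(t)=0$, which is the assertion.

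A more self-contained alternative is to argue via the Cauchy representation. For $F\in H^1(\C^+)$ the companion Cauchy integral with a pole in the lower half plane vanishes, so that $\int_\R F(t)(t+iy)^{-1}\,dt=0$ for every $y>0$. Using the identity $iy(t+iy)^{-1}=1-t(t+iy)^{-1}$ and multiplying through by $iy$ gives
\[
\int_\R F(t)\,dt=\int_\R \frac{tF(t)}{t+iy}\,dt .
\]
Since $|t(t+iy)^{-1}|\le 1$ for all $t\in\R$ and $t(t+iy)^{-1}\to 0$ pointwise as $y\to\infty$, dominated convergence with dominating function $|F|\in L^1(\R)$ drives the right-hand side to $0$, leaving $\int_\R F(t)\,dt=0$.

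The routine pieces are the Riemann--Lebesgue lemma and the dominated-convergence step. The single point that genuinely requires Hardy space theory—and which I regard as the main obstacle to a fully self-contained write-up—is the input that $\widehat{F}$ is supported in $[0,\infty)$ (equivalently, that the lower-half-plane Cauchy integral of $F$ vanishes). This is precisely where the analyticity of $F$ on $\C^+$ is used, and I would simply cite \cite{MR} for it rather than reprove it.
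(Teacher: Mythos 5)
Your first argument is exactly the paper's proof: the paper's one-line justification is that $\widehat{F}(t)=0$ for $t<0$ together with the continuity of the Fourier transform of an $L^1$ function forces $\widehat{F}(0)=\int_\R F(x)\,dx=0$, which is precisely your Riemann--Lebesgue argument. Your Cauchy-integral alternative is also correct and somewhat more self-contained, but both routes hinge on the same Hardy-space input (vanishing of the negative-frequency part, equivalently of the lower-half-plane Cauchy integral), so the proposal matches the paper in substance.
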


\begin{proof} This follows since $\widehat{F}(t) =0$ for $t<0$ and the Fourier transform of an $L^1$-function  is continuous.
\end{proof}

%%
%%  Extremal signatures
%%

\section{Extremal signatures}\label{es-section}

We consider in this section extremal signatures for $\mc{B}_1(\mu,2\tau)$. The change from $\tau$ to $2\tau$ simplifies notation, since most of the calculations for $\mc{B}_1(\mu,2\tau)$ will be done in $\mc{B}_2(\mu,\tau)$. 

We denote by $\mc{C}^+(\tau)$ the class of entire functions $M$ with the following properties.
\begin{itemize}
\item[(i)] $M$ is an entire function of exponential type $\tau$,
\item[(ii)] $M$ is positive on the real line,
\item[(iii)] The function $M$ satisfies
\begin{align}\label{es-krein-condition}
\int_\R \frac{\log^+|M(x)|}{x^2+1} dx <\infty.
\end{align}
\end{itemize}

Let $\tau_0\ge 0$. Let $M\in\mc{C}^+(2\tau_0)$.     It follows from Lemma \ref{akhiezer} that $M = UU^*$ where $U\in\mc{C}$ has exponential type $\tau_0$ and zeros only in the open lower half plane. For $\tau\ge \tau_0$  we define $E_{\tau, \alpha}$ by
\begin{align}\label{E-def}
E_{\tau,\alpha}(z) = U(z) e^{-i(\tau-\tau_0) z - i\alpha}.
\end{align}
 
 Let $A_{\tau, \alpha},B_{\tau,\alpha}$ be the unique real entire functions of exponential type $\le \tau$ with $E_{\tau,\alpha} = A_{\tau,\alpha} -iB_{\tau,\alpha}$.  Recall that $\mu_M$ is given by
\[
\mu_M(A) = \int_A \frac{dx}{M(x)}.
\]

\begin{theorem}\label{es-theorem}  Let $M\in \mc{C}^+(2\tau_0)$, and let $\tau\ge \tau_0$.  Then $\psi ={\rm sgn}(A_{\tau,\alpha} B_{\tau,\alpha})$ is an extremal signature for $\mc{B}_1(\mu_M,2\tau)$. 
\end{theorem}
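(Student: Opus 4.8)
The plan is to reduce the vanishing of the annihilation integral to the fact (Lemma \ref{H1-mean}) that functions in $H^1(\C^+)$ have vanishing mean, after expanding the signature into powers of the inner function $\Theta = E^*/E$. Throughout I abbreviate $E = E_{\tau,\alpha}$, $A = A_{\tau,\alpha}$, $B = B_{\tau,\alpha}$.

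First I would record the structural facts about $E$. Since $M\in\mc{C}^+(2\tau_0)$ is positive on $\R$ and $M = UU^*$ with $U\in\mc{C}$ of exponential type $\tau_0$ having zeros only in the open lower half-plane, $U$ is a Hermite--Biehler function, and \eqref{HB} together with Lemma \ref{krein-lemma} forces $\nu(U^*)\le\nu(U)=\tau_0$. By \eqref{E-def}, $E$ is then Hermite--Biehler of exponential type $\tau$ with $\nu(E)=\tau_0+(\tau-\tau_0)=\tau$, while $EE^* = UU^* = M$ as entire functions, so $M(x)=|E(x)|^2$ on $\R$ and $d\mu_M = dx/|E|^2$. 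In particular $E$ has no real zeros, so $A$ and $B$ have only real, simple, interlacing zeros and $|\psi|=1$ holds $\mu_M$-a.e. Writing $\varphi$ for the phase of $E$, a short computation from $A=\tfrac12(E+E^*)$ and $B=\tfrac{i}{2}(E-E^*)$ gives $A(x)B(x)=\tfrac12|E(x)|^2\sin 2\varphi(x)$ on $\R$, whence $\psi(x)={\rm sgn}(\sin 2\varphi(x))$. It therefore remains to prove $\int_\R F\psi\,d\mu_M = 0$ for every $F\in\mc{B}_1(\mu_M,2\tau)$.

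Next I would expand the signature. Recalling that $e^{2i\varphi}$ is the boundary function of the inner function $\Theta=E^*/E\in H^\infty(\C^+)$ with $\|\Theta\|_\infty\le 1$ (by \eqref{HB}), the classical expansion ${\rm sgn}(\sin t)=\tfrac{4}{\pi}\sum_{k\ge 0}(2k+1)^{-1}\sin((2k+1)t)$ yields, on $\R$, $\psi=\tfrac{2}{\pi i}\sum_{k\ge 0}(2k+1)^{-1}(\Theta^{2k+1}-\Theta^{-(2k+1)})$, using $\overline{\Theta}=\Theta^{-1}$ on $\R$. The partial sums of this series are uniformly bounded and converge to $\psi$ pointwise except at the isolated zeros of $AB$; since $F/M = F/|E|^2\in L^1(\R)$, dominated convergence reduces the claim to showing $\int_\R (F/M)\,\Theta^{2k+1}\,dx = 0$ and $\int_\R (F/M)\,\Theta^{-(2k+1)}\,dx = 0$ for every $k\ge 0$. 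The main step is to realize these integrands as boundary values of $H^1(\C^+)$ functions. Because $F/M\in L^1(\R)$, Jensen's inequality (as in the proof of Lemma \ref{HpE-Lp-equivalent}) bounds $\int_\R(1+x^2)^{-1}\log^+|F/M|\,dx$, and combined with \eqref{es-krein-condition} this shows $F$ satisfies \eqref{krein-ineq}; hence $F\in\mc{C}$ and, by Lemma \ref{krein-lemma}, $\nu(F)\le 2\tau$ and $\nu(F^*)\le 2\tau$. Since $\nu(E)=\tau$, the quotient $F/E^2$ has bounded type in $\C^+$, mean type $\nu(F)-2\tau\le 0$, and $L^1(\R)$ boundary values (of modulus $|F|/M$); by the argument establishing Lemma \ref{HpE-HpC-equivalent} this places $F/E^2$ in $H^1(\C^+)$, and likewise $F^*/E^2\in H^1(\C^+)$. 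On $\R$ we have $(F/M)\Theta^{2k+1}=(F/E^2)\,\Theta^{2k}$, a product of an $H^1(\C^+)$ function with the bounded analytic $\Theta^{2k}$, hence in $H^1(\C^+)$, so Lemma \ref{H1-mean} gives $\int_\R (F/M)\Theta^{2k+1}\,dx=0$. For the conjugate term I would work with $(F^*/M)\Theta^{2k+1}=(F^*/E^2)\Theta^{2k}\in H^1(\C^+)$, whose mean vanishes; taking complex conjugates and using $\overline{F^*(x)}=F(x)$ and $\overline{\Theta(x)}=\Theta^{-1}(x)$ on $\R$ converts this into $\int_\R (F/M)\Theta^{-(2k+1)}\,dx=0$. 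Resumming the series then yields $\int_\R F\psi\,d\mu_M=0$, which is the assertion.

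I expect the main obstacle to be the Hardy-space bookkeeping of the third paragraph: verifying that $F/E^2$ genuinely lies in $H^1(\C^+)$ rests on the bounded-type and mean-type estimates, and this is exactly where the hypothesis $\tau\ge\tau_0$ enters, through $\nu(E)=\tau\ge\nu(F)/2$. The secondary technical point is justifying the termwise integration against the only conditionally convergent Fourier series, which is handled by the uniform boundedness of its partial sums together with $F/M\in L^1(\R)$.
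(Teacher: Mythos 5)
Your proposal is correct and follows essentially the same route as the paper's proof: the same reduction $d\mu_M = dx/|E|^2$, the same key identity absorbing one factor of $\Theta = E^*/E$ to realize $(F/M)\Theta^{2k+1}$ as the boundary value of the $H^1(\C^+)$ function $(F/E^2)\Theta^{2k}$, the same square-wave Fourier expansion of ${\rm sgn}\sin 2\varphi$ with uniformly bounded partial sums, and the same appeal to Lemmas \ref{HpE-Lp-equivalent}, \ref{HpE-HpC-equivalent}, and \ref{H1-mean}. The only cosmetic difference is that you handle the negative powers $\Theta^{-(2k+1)}$ by applying the positive-power result to $F^*$ and conjugating, whereas the paper decomposes $F$ into real entire functions and integrates against $\Re S_N$ — these are equivalent maneuvers, both resting on $F^*\in\mc{B}_1(\mu_M,2\tau)$.
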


\begin{proof} Throughout this proof we set $\mu = \mu_M$ and $E = E_{\tau,\alpha}$ with $E=A-iB$. By construction $U$ (and hence $E^2$) is an HB function that has bounded type in $\C^+$. Let $F\in\mc{B}_1(\mu,2\tau)$.  Lemma \ref{HpE-Lp-equivalent} implies that $F\in \mc{H}^1(E^2)$, and Lemma \ref{HpE-HpC-equivalent} implies that
\begin{align}\label{F-Hardy-condition}
F/E^2\in H^1(\C^+).
\end{align}

Let $N\in\N$ and define $S_N:\C\to \C$ by
\[
S_N(z) = \frac{4}{\pi i}\sum_{k=0}^{N-1} \frac{1}{2k+1}\left( \frac{E^*(z)}{E(z)}\right)^{2k+1}.
\]

Since $E$ is Hermite-Biehler, it follows that $E^*/E\in H^\infty(\C^+)$. The identity
\[
\frac{FS_N}{EE^*} = \frac{4F}{\pi i E^2} \sum_{k=0}^{N-1} \frac{1}{2k+1}\left( \frac{E^*}{E}\right)^{2k}
\]
implies with \eqref{F-Hardy-condition} that $FS_N(E E^*)^{-1}\in H^1(\C^+)$ for all $N\in\N$. For real $x$ we have $M(x) = |E(x)|^2$. This and   Lemma \ref{H1-mean} imply that for all $F\in \mc{B}_1(\mu,2\tau)$ and all $N\in\N$
\[
\int_\R F(x) S_N(x) d\mu(x) = \int_\R \frac{F(x) S_N(x)}{E(x) E^*(x)} dx =0.
\]

Since $F^*\in \mc{B}_1(\mu,2\tau)$, the identity $F(z) = \frac12 (F(z) + F^*(z)) - i\cdot \frac{i}{2} (F(z) - F^*(z))$  shows that any element of $\mc{B}_1(\mu,2\tau)$ can be written as a complex linear combination of real entire functions in $ \mc{B}_1(\mu,2\tau)$. We obtain for all $F\in \mc{B}_1(\mu,2\tau)$ that
\begin{align}\label{FSNE-mean}
\int_\R F(x) \Re S_N(x) d\mu(x) =0.
\end{align}

Denote by $\varphi$ the phase function of $E$. Recall that $\varphi$ is continuous and monotonically increasing on $\R$, and 
\begin{align}\label{EE*-rep}
\frac{E^*(x)}{E(x)} = e^{2 i \varphi(x)}
\end{align}
for all real $x$.   This allows us to rewrite \eqref{FSNE-mean} as
\[
 \int_\R F(x) \sum_{k=0}^{N-1} \frac{4\sin((4k+2)\varphi(x))}{\pi(2k+1)} d\mu(x)=0.
\]

The inner sum converges to ${\rm sgn}\sin2\varphi(x)$ and is uniformly bounded in $x$ and $N$. Hence dominated convergence implies that
\[
 \int_\R F(x)\, {\rm sgn}\sin(2\varphi(x)) d\mu(x) =0,
\]
which means that ${\rm sgn}\sin 2\varphi \in \mc{S}(\mu,2\tau)$. Finally, \eqref{EE*-rep} implies that $\sin 2\varphi = 2|E|^{-2}A B$ on $\R$,
which finishes the proof.
\end{proof}

By way of an example we consider
\[
d\nu(x) = \frac{dx}{x^2+1}.
\]

We may take $U(z) = z+i $, which means that  $\tau_0 =0$. This gives $E_{\tau,\alpha}(z) = (z+i) e^{-i\tau z - i \alpha}$ and hence
\begin{align*}
A_{\tau,\alpha}(z) &=  z \cos(\tau z +\alpha) + \sin(\tau z +\alpha)\\
B_{\tau,\alpha}(z) &= z \sin(\tau z+\alpha) -\cos(\tau z+\alpha).
\end{align*}

We would like to emphasize that the factorization of $M$ leads to an explicit construction of an entire function $A_\tau B_\tau$ whose real zeros are the points of sign change for an extremal signature. Combined with the interpolation procedure of the next section this allows constructions of best approximations.

%%
%%  LP functions and interpolation
%%

\section{LP functions and interpolation}\label{LP-section}

A Laguerre-P\'{o}lya function  is a real entire function $L$ with Hadamard factorization given by
\begin{equation}\label{hadamardproduct}
L(z)=\frac{L^{(r)}(0)}{r!} z^r e^{-az^2+bz}\prod_{j=1}^\infty\left(1-\frac{z}{\xi_j}\right)e^{z/\xi_j},
\end{equation}
where $r\in\Z^{+}$, $a, b, \xi_j \in \R$, with $a \geq 0$, $\xi_j \neq 0$ and 
\[
\sum_{j=1}^\infty \xi_j^{-2}<\infty
\]
(the product may be finite or empty). We say that $L$ has degree $N = N_L$, with $0 \leq N< \infty$, if $a=0$ in \eqref{hadamardproduct} and $L$ has exactly $N$ zeros counted with multiplicity. Otherwise we set $N_L = \infty$.

\smallskip

For our purposes we require the property of Laguerre-P\'{o}lya functions that in vertical strips their reciprocals can be written as Laplace transforms. In fact, if $(\tau_1, \tau_2) \subset  \R$ is an open interval not containing any zeros of $L$ (we allow $\tau_1,\tau_2\in \{\pm\infty\}$) and $c \in (\tau_1, \tau_2)$, we have 
\begin{equation}\label{gc-trafo}
\frac{1}{L(z)} = \int_{-\infty}^\infty g_c(t) e^{-zt} dt
\end{equation}
for $\tau_1 < \Re(z) < \tau_2$, where 
\begin{equation}\label{gc-def}
g_c(t) = \frac{1}{2\pi i }\int_{c-i\infty}^{c+i\infty} \frac{e^{st}}{L(s)} ds.
\end{equation}
The integral \eqref{gc-def} is absolutely convergent if $N \ge 2$, and is understood as a Cauchy principal value if $N =1$. If $N =0$, \eqref{gc-trafo} holds with $g_c$ being an appropriate Dirac delta distribution. A simple application of the residue theorem gives us that $g_c = g_d$ if $c,d \in (\tau_1, \tau_2)$. An account of this theory can be found in \cite[Chapters II-V]{HW}.

Throughout this section we denote by $\alpha_L$ the smallest positive zero of $F$. (We set $\alpha_L=\infty$ if no such zero exists.) Analogously $\beta_L$ is the largest nonpositive zero of $L$.

%%
%% Truncated exponential
%% 

\subsection{Approximation to the truncation of the exponential function}\label{trunc-exp} The following construction was developed in \cite{CL} in relation with the Beurling-Selberg extremal problem. Let $L$ be a Laguerre-P\'olya function with $L(\alpha_F/2) >0$. (If $\alpha_L =\infty$, assume $L(1)>0$.) Let $g = g_{\alpha_L/2}$ ($g=g_1$ if $\alpha_L =\infty$), and define for $\lambda>0$
\begin{align}\label{A-representation}
\begin{split}
I_1(L,\lambda, z) &= L(z) \int_{-\infty}^0 e^{-zu} g(u-\lambda) du \text{ if }\Re z<\alpha_L,\\
I_2(L,\lambda,z) &= e^{-\lambda z} - L(z) \int_0^\infty e^{-zu}  g(u-\lambda) du \text{ if }\Re z>\beta_L.
\end{split}
\end{align}

Morera's theorem implies that  that $z\mapsto I_1(L,\lambda,z)$ is analytic in $\Re z<\alpha_L$ and $z\mapsto I_2(L,\lambda,z)$ is analytic in $\Re z>\beta_L$. Multiplication of \eqref{gc-trafo} by $L(z) e^{-\lambda z}$, a change of variable in the right hand side, and inserting the resulting identity in \eqref{A-representation} shows that $I_1(L,\lambda,z) = I_2(L,\lambda,z)$ for $\beta_L <\Re z<\alpha_L$, and analytic continuation implies that these functions are restrictions to their respective half planes of an entire function in $z$ that we will call $I(L,\lambda,z)$. Moreover, estimation of the integrals in \eqref{A-representation} implies the existence of $c>0$ so that 
\begin{align}\label{complex-growth}
|I(L,\lambda, z)|\le c(1+|L(z)|)
\end{align}
 for all $z\in\C$.

To simplify notation we set
\[
x_+^0 = \begin{cases} 1 \text{ if }x\ge 0, \\ 0 \text{ if } x<0.\end{cases}
\]

Assume that $L(0)=0$. We define an entire function $z \mapsto I^\circ(L,\lambda,z)$ by
\[
I^\circ(L,\lambda,z) = I(L,\lambda,z) - g(-\lambda)\frac{L(z)}{z}.
\]

\begin{lemma}\label{AB-properties} Let $\lambda>0$ and let $L$ be an LP function with $L(0)=0$ and $L(\alpha_F/2)>0$.  Then 
\begin{align}\label{sign-of-difference}
L(x) \left( I^\circ(L,\lambda,x) - x_+^0 e^{-\lambda x} \right) \le 0
\end{align}
for all real $x$, and $I^\circ(L,\lambda,\xi) = \xi_+^0 e^{-\lambda \xi}$ for all $\xi\neq 0$ with $L(\xi) =0$. Moreover, there exists $C>0$ so that for all real $x$ the inequality
\begin{align}\label{B-integrable}
|I^\circ(L,\lambda,x) - x_+^0 e^{-\lambda x}|\le C\frac{|L(x)|}{1+x^2}
\end{align}
is valid.
\end{lemma}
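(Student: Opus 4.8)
The plan is to extract all three assertions from the integral representations \eqref{A-representation} for $I_1,I_2$ together with the Laplace identity \eqref{gc-trafo} for $g$; the interpolation statement and the sign estimate come essentially for free once the correct monotonicity property of $g$ is isolated, and that monotonicity will be the crux. I begin with the interpolation claim. If $\xi\neq 0$ is a zero of $L$ with $\xi>0$, then $\Re\xi>0=\beta_L$, so $I_2$ applies and $I(L,\lambda,\xi)=e^{-\lambda\xi}-L(\xi)\int_0^\infty e^{-\xi u}g(u-\lambda)\,du=e^{-\lambda\xi}$; if $\xi<0$, then $\Re\xi<\alpha_L$, so $I_1$ applies and $I(L,\lambda,\xi)=L(\xi)\int_{-\infty}^0 e^{-\xi u}g(u-\lambda)\,du=0$. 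Since $L(\xi)=0$ and $\xi\neq 0$, the correction term $g(-\lambda)L(\xi)/\xi$ vanishes, so $I^\circ(L,\lambda,\xi)=\xi_+^0e^{-\lambda\xi}$, as claimed.

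For the sign estimate \eqref{sign-of-difference}, write $D(x)=I^\circ(L,\lambda,x)-x_+^0e^{-\lambda x}$ and evaluate it on the two half-lines: on $x>0$ via $I_2$ (where $x_+^0e^{-\lambda x}=e^{-\lambda x}$) and on $x<0$ via $I_1$ (where $x_+^0e^{-\lambda x}=0$). The role of the correction term is to center the integrand at the origin: after inserting $1/x=\int_0^\infty e^{-xu}\,du$ for $x>0$, respectively $1/x=-\int_{-\infty}^0 e^{-xu}\,du$ for $x<0$, it merges into the integral to produce
\[
D(x)=-L(x)\int_0^\infty e^{-xu}\bigl(g(u-\lambda)-g(-\lambda)\bigr)\,du\qquad(x>0),
\]
\[
D(x)=L(x)\int_{-\infty}^0 e^{-xu}\bigl(g(u-\lambda)-g(-\lambda)\bigr)\,du\qquad(x<0).
\]
Multiplying by $L(x)$ gives $L(x)D(x)=-L(x)^2\int_0^\infty(\cdots)\,du$ for $x>0$ and $L(x)D(x)=L(x)^2\int_{-\infty}^0(\cdots)\,du$ for $x<0$; both are $\le 0$ provided $g$ is nondecreasing, since then $g(u-\lambda)-g(-\lambda)\ge 0$ for $u>0$ and $\le 0$ for $u<0$.

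The monotonicity of $g$ is the step I expect to be the main obstacle, and it is where the hypotheses $L(0)=0$ and $L(\alpha_L/2)>0$ are used. Differentiating \eqref{gc-def} under the integral sign, $g'$ is the kernel associated in the sense of \eqref{gc-trafo}--\eqref{gc-def} to the reciprocal of $\widetilde L(z):=L(z)/z$ at the same abscissa $c=\alpha_L/2$, because $s/L(s)=1/\widetilde L(s)$. Now $\widetilde L$ is again a Laguerre--P\'olya function (deleting the simple real zero at the origin preserves the class), and $\widetilde L(\alpha_L/2)=L(\alpha_L/2)/(\alpha_L/2)>0$, so $\widetilde L$ is positive on the gap containing $c$. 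For such functions the associated kernel is a nonnegative P\'olya frequency function (this is the content of the convolution-transform theory of \cite{HW}), whence $g'\ge 0$ and $g$ is nondecreasing. I would isolate this as a short lemma.

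Finally, the estimate \eqref{B-integrable} follows from the same two displays by integration by parts. Because $G(u):=g(u-\lambda)-g(-\lambda)$ vanishes at $u=0$, the boundary terms disappear and, for $x>0$,
\[
\int_0^\infty e^{-xu}G(u)\,du=\frac1x\int_0^\infty e^{-xu}g'(u-\lambda)\,du\le \frac{\|g'\|_\infty}{x^2},
\]
using that $g'$ is bounded (which holds once $N_L\ge 2$, so that $1/\widetilde L$ decays along vertical lines); the case $x<0$ is symmetric. Hence $|D(x)|\le C|L(x)|\,x^{-2}$ for large $|x|$, while for bounded $x$ the ratio $|D(x)|/|L(x)|$ stays bounded by continuity. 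Combining the two ranges yields $|D(x)|\le C|L(x)|/(1+x^2)$, which is \eqref{B-integrable}. Note that the crude bound \eqref{complex-growth} is not sharp enough here; the extra power of $x$ comes precisely from the vanishing of $G$ at the origin, that is, from the correction term.
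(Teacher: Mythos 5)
Your proposal follows the paper's proof almost step for step: the same two half-line formulas for the difference $D(x)$, obtained by absorbing the correction term $g(-\lambda)L(x)/x$ into the Laplace integrals via $1/x=\int_0^\infty e^{-xu}\,du$; the same reduction of \eqref{sign-of-difference} to the monotonicity of $g$; the same identification of $g'$ as the kernel attached to $\widetilde L(z)=L(z)/z$ together with an appeal to Schoenberg's total-positivity theory from \cite{HW} (the paper phrases this via the sign-change theorem with $n=0$, you via the P\'olya-frequency-function characterization --- the same classical fact); and your integration-by-parts derivation of the $O(x^{-2})$ decay is a slightly more explicit version of the paper's one-line assertion that the integrals are $O(|x|^{-2})$ because the integrands vanish at $u=0$.

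The one genuine omission is the multiplicity of the zero at the origin. The lemma assumes only $L(0)=0$, but your monotonicity step deletes ``the simple real zero at the origin''; if the zero has multiplicity $r\ge 2$, then $\widetilde L$ still vanishes at $0$, the gap of $\widetilde L$ containing $c=\alpha_L/2$ is $(0,\alpha_L)$, and Schoenberg's characterization (which requires the transform to converge in a neighborhood of $s=0$) no longer applies to $g'$ directly. The paper disposes of this case with an explicit iteration (pass to $L(z)/z^r$ and $g^{(r)}$, then work back down); your proof needs the analogous step. Two further remarks that are \emph{not} gaps relative to the paper: first, the displays you write (which coincide with the paper's proof) actually correspond to the correction term $+g(-\lambda)L(z)/z$, opposite in sign to the paper's displayed definition of $I^\circ$ --- a typo in the paper that both you and the authors silently correct, since with the literal definition the integrand would be $g(u-\lambda)+g(-\lambda)$ and the argument would collapse. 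Second, your claim that $|D(x)|/|L(x)|$ ``stays bounded by continuity'' for bounded $x$ fails near $x=0$: the target function has a unit jump there while $I^\circ$ is continuous and $L(0)=0$, so \eqref{B-integrable} cannot hold in any punctured neighborhood of the origin. This is a defect of the statement itself (harmless for its use, which only needs integrability against $\mu_M$), and the paper's proof is equally silent about that range.
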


\begin{proof}  It follows from  \eqref{A-representation} that
\begin{align*}
I^\circ(L,\lambda, x) - x_+^0 e^{-\lambda x}  =
\begin{cases}
\displaystyle - L(x) \int_{0}^\infty e^{- xu} (g(u-\lambda) - g(-\lambda)) du&\text{ if }x>0,\\[1.5ex]
\displaystyle L(x) \int_{-\infty}^0 e^{-xu} (g(u-\lambda) - g(-\lambda)) du&\text{ if }x<0.
\end{cases}
\end{align*}

Since both integrands equal zero at $u=0$ it follows that the integrals are $\mc{O}(|x|^{-2})$ for large $|x|$. We also get immediately that  $I^\circ(L,\lambda,\xi) = \xi_+^0 e^{-\lambda \xi}$ for all $\xi\neq 0$ with $L(\xi) =0$. Inequality \eqref{sign-of-difference} follows once we show that $g$ is monotonically increasing on $\R$. 

We assume first that the zero of $L$ at the origin is simple. Then $L(z)/z$ extends to an LP function that is positive at the origin. An integration by parts in \eqref{gc-trafo} gives
\[
\frac{z}{L(z)} = \int_\R e^{-zt} g'(t) dt
\]
in an open vertical strip $\beta<\Re z<\alpha$ with $\beta<0<\alpha$. Hence $g'$ is a totally positive function in the sense of \cite{HW}. By a classical theorem of I.J.\ Schoenberg \cite{Sch} (see also \cite[page 91]{HW}) the $n$th derivative of a totally positive function (if it exists) has exactly $n$ sign changes on the real line. Applying this to $L(z)/z$ and $g'$ shows in particular that $g'$ has no sign changes on the real line, and it must be positive since $L(z)/z$ is positive at the origin. This means that $g$ is monotonically increasing. 

If the zero of $L$ at the origin has higher multiplicity, we iterate this argument.  
\end{proof}

\begin{theorem} Let $M\in\mc{C}^+(2\tau_0)$, let $\tau\ge \tau_0$, and let $E_{\tau,\alpha}$ be defined by \eqref{E-def}.  Assume in addition that $B_{\tau,\alpha}(0) =0$ and $A_\tau B_\tau$ is positive in some interval with left endpoint at the origin. Then for all $F\in \mc{B}_1(\mu_M,2\tau)$ 
\[
\int_\R \left|F(x) - x_+^0e^{-\lambda x} \right| d\mu_M(x)  \ge \int_0^\infty e^{-\lambda x} {\rm sgn}(A_\tau(x) B_\tau(x)) d\mu_M(x)
\]
with equality if $F(z) = I^\circ(A_\tau B_\tau, \lambda, z)$. 
\end{theorem}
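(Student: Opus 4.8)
The plan is to exhibit $F_0(z)=I^\circ(A_\tau B_\tau,\lambda,z)$ as the minimizer and to show that its error equals the asserted right-hand side, which simultaneously serves as a lower bound for every competitor. Write $L=A_\tau B_\tau$ and $f(x)=x_+^0 e^{-\lambda x}$. First I would confirm that Lemma \ref{AB-properties} applies to $L$. Since $E=E_{\tau,\alpha}$ is a Hermite--Biehler function without real zeros, $A_\tau$ and $B_\tau$ are real entire of exponential type $\tau$ with only real zeros (the interlacing simple zeros discussed after Lemma \ref{H1-mean}); being of exponential type their zeros satisfy $\sum \xi_j^{-2}<\infty$, so each is a Laguerre--P\'olya function and hence so is $L$, which has type $2\tau$. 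The hypothesis $B_{\tau,\alpha}(0)=0$ gives $L(0)=A_\tau(0)B_\tau(0)=0$, and the positivity of $A_\tau B_\tau$ on an interval with left endpoint $0$ gives $L(\alpha_L/2)>0$. Thus Lemma \ref{AB-properties} is available for $F_0=I^\circ(L,\lambda,\cdot)$.

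Next I would check $F_0\in\mc{B}_1(\mu_M,2\tau)$. The bound \eqref{complex-growth} together with $L(0)=0$ (so that $L(z)/z$ is entire of type $2\tau$) shows $F_0$ is entire of exponential type $2\tau$. For integrability, \eqref{B-integrable} gives $|F_0(x)-f(x)|\le C|L(x)|(1+x^2)^{-1}$, and since $L/M=A_\tau B_\tau/|E|^2$ is bounded on $\R$ (the remark after Lemma \ref{H1-mean}),
\[
\int_\R |F_0-f|\,d\mu_M\le C\,\|L/M\|_\infty\int_\R \frac{dx}{1+x^2}<\infty .
\]
Granting the side condition $\int_0^\infty e^{-\lambda x}\,d\mu_M<\infty$ --- which is precisely finiteness of the claimed right-hand side and hence puts $f$ in $L^1(\mu_M)$ --- we get $F_0=(F_0-f)+f\in L^1(\mu_M)$, so $F_0\in\mc{B}_1(\mu_M,2\tau)$.

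Now set $\psi={\rm sgn}(A_\tau B_\tau)$. By Theorem \ref{es-theorem}, $\psi\in\mc{S}(\mu_M,2\tau)$, so $\int_\R G\psi\,d\mu_M=0$ for every $G\in\mc{B}_1(\mu_M,2\tau)$, and $|\psi|=1$ $\mu_M$-a.e. Inequality \eqref{sign-of-difference} reads $L(x)(F_0(x)-f(x))\le 0$, and the strict monotonicity of $g$ proved in Lemma \ref{AB-properties} makes it strict off the zeros of $L$; hence ${\rm sgn}(f-F_0)=\psi$ $\mu_M$-a.e. For any $F\in\mc{B}_1(\mu_M,2\tau)$ the identity $|\psi|=1$ gives $|F-f|\ge (f-F)\psi$ pointwise, so
\[
\int_\R |F-f|\,d\mu_M\ge \int_\R (f-F)\psi\,d\mu_M=\int_\R f\psi\,d\mu_M-\int_\R F\psi\,d\mu_M=\int_\R f\psi\,d\mu_M,
\]
the last step by annihilation. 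Since $\int_\R f\psi\,d\mu_M=\int_0^\infty e^{-\lambda x}\,{\rm sgn}(A_\tau(x)B_\tau(x))\,d\mu_M(x)$, this is the asserted inequality --- conceptually it is Theorem A for the signature $\psi$. For $F=F_0$ the pointwise inequality becomes equality a.e. (since ${\rm sgn}(f-F_0)=\psi$), which yields the stated equality.

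The structural ingredients --- the annihilation $\int_\R F\psi\,d\mu_M=0$ from Theorem \ref{es-theorem} and the sign identity ${\rm sgn}(f-F_0)=\psi$ from Lemma \ref{AB-properties} --- are supplied by earlier results, so the theorem is mostly an assembly. I expect the point needing the most care to be the membership $F_0\in\mc{B}_1(\mu_M,2\tau)$: verifying that $L=A_\tau B_\tau$ genuinely qualifies as a Laguerre--P\'olya function of the right type and sign so that Lemma \ref{AB-properties} applies, and controlling $\int_\R|F_0-f|\,d\mu_M$ via \eqref{B-integrable} and the boundedness of $L/M$, while keeping the side condition $\int_0^\infty e^{-\lambda x}\,d\mu_M<\infty$ in view.
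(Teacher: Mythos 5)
Your proof is correct and takes essentially the same route as the paper: Theorem \ref{es-theorem} supplies the signature ${\rm sgn}(A_\tau B_\tau)$, Lemma \ref{AB-properties} together with \eqref{complex-growth} and the boundedness of $A_\tau B_\tau/M$ yields membership of $I^\circ(A_\tau B_\tau,\lambda,\cdot)$ in $\mc{B}_1(\mu_M,2\tau)$ and the sign of the error, and your pointwise inequality plus annihilation argument is exactly the duality step the paper delegates to Theorem A. Your explicit check that $A_\tau B_\tau$ is a Laguerre--P\'olya function and your flagging of the tacit requirement $\int_0^\infty e^{-\lambda x}\,d\mu_M<\infty$ (needed so the annihilation identity may be applied to $I^\circ$, and left implicit in the paper) are welcome refinements but do not change the approach.
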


\noindent{\it Remark.} For given $M$ the conditions on $A_{\tau,\alpha}$ and  $B_{\tau,\alpha}$ may be satisfied by a suitable choice of $\alpha$. 

\begin{proof} We set $\mu=\mu_M$ and $E = E_{\tau,\alpha}$. It follows from Theorem \ref{es-theorem} that ${\rm sgn}(A B)$ is an extremal signature for $\mc{B}_1(\mu,2\tau)$. By construction $I^\circ(A B, \lambda,x) -x_+^0 e^{-\lambda x}$ has its sign changes on the real line exactly at the zeros of $A B$.

Since $E$ has exponential type $\tau$, it follows that $A B$ has exponential type $2\tau$. We obtain from \eqref{complex-growth} that $I^\circ(A B, \lambda, z)$ has the same exponential type. Since $A B/M$ is bounded on $\R$, it follows from \eqref{B-integrable} that 
\[
\int_\R \left| I^\circ(A B, \lambda, x) - x_+^0e^{-\lambda x}\right|d\mu<\infty.
\]

The identity
\[
| I^\circ(A B, \lambda, x) - x_+^0e^{-\lambda x}| = {\rm sgn}(A(x) B(x))( I^\circ(A B, \lambda, x) - x_+^0e^{-\lambda x})
\]
and \eqref{intro-es} for $\psi = {\rm sgn}(AB)$ gives
\[
\int_\R \left|I^\circ(A B, \lambda, x) - x_+^0e^{-\lambda x}\right|d\mu(x) =  \int_0^\infty e^{-\lambda x} {\rm sgn}(A(x) B(x)) d\mu(x).
\]

The inequality for $F\in \mc{B}_1(\mu,2\tau)$ follows from Theorem A.
\end{proof}

\subsection{Structure of $E_{\tau,\alpha}$ for even measures} Best approximations to the Gaussian $e^{-\lambda x^2}$ and to $e^{-\lambda |x|}$ rely on interpolation formulas that require the function $A_\tau B_\tau$ to be even. We show next that if $M$ is even, then $\alpha_0$ exists so that  $A_{\tau,\alpha_0} B_{\tau,\alpha_0}$ is even.

Assume that $M$ is even. Since an entire function of finite exponential type is a function of order $1$ and normal type in the language of Levin, Hadamard's factorization theorem (e.g., \cite[page 24]{Le}) gives
\[
M(z) = e^{az + b} \prod_{\xi \in\mc{T}_M} \left(1-\frac{z}{\xi}\right) e^{z/\xi}
 \] 
 where $\mc{T}_M$ is the set of zeros of $M$ listed with multiplicity. We note that $\mc{T}_M$ contains no real numbers. Since $M$ is even and positive on the real line, $\xi\in\mc{T}_M$ implies that $-\xi, \overline{\xi}, -\overline{\xi}\in \mc{T}_M$ with the same multiplicity. We note that if $\Im \xi <0$, then $-\Im \overline{\xi}<0$, and the condition that $M$ is positive on the real line implies that the first exponential factor is of the form $e^{\alpha z+\beta}$ with real $\alpha,\beta$, and since $M$ is even, $\alpha=0$.
 
It follows that the function $U$ defined by
 \[
 U(z) =  e^{\beta/2} \prod_{\substack{\xi \in \mc{T}_M \\ \Im \xi <0}} \left(1-\frac{z}{\xi}\right) e^{z/\xi}
 \]
 satisfies $UU^* = M$ and $ U^*(z) = U(-z)$. We obtain $E_{\tau,0} (-z) = E_{\tau,0}^*(z)$.  A direct calculation gives
 \begin{align}\label{AB-even}
 A_{\tau,\frac\pi4}(z) B_{\tau,\frac\pi4}(z) = -\frac14 \left(E_{\tau,0}(z)^2 + E_{\tau,0}^*(z)^2\right),
 \end{align}
which implies that $ A_{\tau,\frac\pi4} B_{\tau,\frac\pi4}$ is even.

%%
%% Fejer kernel
%%

\subsection{Approximation to $e^{-\lambda |x|}$} 
 For a Laguerre-P\'olya entire function $L$ we define $I^\#$ by
\begin{align}
\begin{split}
I^\#(L, \lambda,z) &= I(L, \lambda,z) + I(L, \lambda,-z),
\end{split}
\end{align}
where $I(L,\lambda,z)$ is defined in Section \ref{trunc-exp}.

The necessary interpolation result for this case may be obtained from \cite[Proposition 8]{CL} where it is shown that for $\lambda>0$ and $L$ an even LP function with $ F(0)>0$ the inequality
\[
L(x) \left( e^{-\lambda |x|} - I^\#(L,\lambda,x)\right) \ge 0
\]
is valid for all real $x$. Moreover, it is shown that $I^\#(L,\lambda, \xi) =e^{-\lambda|\xi|}$ for all $\xi$ with $L(\xi) =0$. An application of this inequality combined with the above construction of even $L=A_{\tau,\pi/4} B_{\tau,\pi/4}$ (which necessarily must have a nonzero value at the origin, since its zeros are all simple, see Section \ref{dBspace}) gives the following result. 

\begin{theorem} Let $M\in\mc{C}^+(2\tau_0)$ be even, and let $\tau\ge \tau_0$.  Then for all $F\in \mc{B}_1(\mu_M, 2\tau)$
\[
\int_\R |F(x) - e^{-\lambda |x|} |d\mu_M(x) \ge \int_\R e^{-\lambda |x|} {\rm sgn}(A_{\tau,\frac\pi4}(x) B_{\tau,\frac\pi4}(x)) d\mu_M(x)
\]
with equality if $F(z) = I^\#(A_{\tau,\frac\pi4} B_{\tau,\frac\pi4}, \lambda, z)$. 
\end{theorem}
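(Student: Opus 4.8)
The plan is to follow verbatim the template of the preceding theorem (the one built on Lemma \ref{AB-properties}), replacing the truncated exponential by $e^{-\lambda|x|}$ and the operator $I^\circ$ by $I^\#$, and using the even interpolation result \cite[Proposition 8]{CL} in place of Lemma \ref{AB-properties}. Throughout I set $\mu=\mu_M$, $E=E_{\tau,\frac\pi4}$, $A=A_{\tau,\frac\pi4}$, $B=B_{\tau,\frac\pi4}$, and $L=AB$. The first step is to record the structural properties of $L$ that make \cite[Proposition 8]{CL} applicable. Since $E$ has no real zeros, the discussion in Section \ref{dBspace} shows that $A$ and $B$ have only real, simple, interlacing zeros; hence $L=AB$ is a real entire function with only real simple zeros, i.e.\ a Laguerre--P\'olya function, and $L(0)\neq 0$. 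Because $M$ is even, the even-measure construction preceding \eqref{AB-even} gives that $L$ is even, and evaluating at the origin (where $E_{\tau,0}(0)=U(0)\in\R$) shows $L(0)>0$, which is the positivity hypothesis of \cite[Proposition 8]{CL}. Finally $E$ has exponential type $\tau$, so $L$ has exponential type $2\tau$.

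Next I would set $F=I^\#(L,\lambda,\cdot)$ and transfer the sign and interpolation data. By \cite[Proposition 8]{CL} one has $L(x)\big(e^{-\lambda|x|}-F(x)\big)\ge 0$ for all real $x$, and $F(\xi)=e^{-\lambda|\xi|}$ at every zero $\xi$ of $L$. Equivalently, $e^{-\lambda|x|}-F$ changes sign only at the simple zeros of $L$, and ${\rm sgn}\big(e^{-\lambda|x|}-F(x)\big)={\rm sgn}(A(x)B(x))$ wherever $L\neq 0$. By Theorem \ref{es-theorem} applied with $\alpha=\pi/4$, the function ${\rm sgn}(AB)$ belongs to $\mc{S}(\mu,2\tau)$, and it has modulus one $\mu$-a.e.\ because the zeros of $AB$ are isolated.

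Then I would verify that $F\in\mc{B}_1(\mu,2\tau)$. The bound \eqref{complex-growth} together with the evenness of $L$ gives $|F(z)|\le c\big(1+|L(z)|\big)$, so $F$ is entire of exponential type $2\tau$. For integrability I would invoke the $I^\#$-analogue of \eqref{B-integrable} supplied by \cite[Proposition 8]{CL}, namely $|F(x)-e^{-\lambda|x|}|\le C|L(x)|/(1+x^2)$; since $M(x)=|E(x)|^2$ on $\R$ and $AB/|E|^2=L/M$ is bounded on $\R$ (Section \ref{dBspace}), this yields $\int_\R|F-e^{-\lambda|x|}|\,d\mu\lesssim\int_\R(1+x^2)^{-1}\,dx<\infty$. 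Combined with $e^{-\lambda|x|}\in L^1(\mu)$ this gives $F\in L^1(\mu)$ and also shows the right-hand side of the assertion is finite.

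Finally I would assemble the equality case exactly as in the previous proof: using $|F-e^{-\lambda|x|}|=-{\rm sgn}(AB)\big(F-e^{-\lambda|x|}\big)$ and \eqref{intro-es} for $\psi={\rm sgn}(AB)$ (which gives $\int_\R{\rm sgn}(AB)\,F\,d\mu=0$) produces
\[
\int_\R \big|F(x)-e^{-\lambda|x|}\big|\,d\mu(x)=\int_\R e^{-\lambda|x|}\,{\rm sgn}\big(A_{\tau,\frac\pi4}(x)B_{\tau,\frac\pi4}(x)\big)\,d\mu(x).
\]
The inequality for an arbitrary $G\in\mc{B}_1(\mu,2\tau)$ then follows from Theorem A, since $\overline{{\rm sgn}}(e^{-\lambda|x|}-F)={\rm sgn}(AB)\in\mc{S}(\mu,2\tau)$. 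I expect the main obstacle to be the integrability bookkeeping of the third step: confirming the pointwise estimate $|F-e^{-\lambda|x|}|\le C|L|/(1+x^2)$ for $I^\#$ (the even analogue of \eqref{B-integrable}) and checking $e^{-\lambda|x|}\in L^1(\mu_M)$, the latter resting on the exponential decay of $e^{-\lambda|x|}$ dominating the at-most-subexponential growth of $1/M=1/|U|^2$ forced by $U$ being a real-zero-free Cartwright-class function. Matching the signs of $L(0)$, the interpolation inequality, and the final integral (so that the displayed right-hand side is genuinely the nonnegative error) also requires care, but once $L(0)>0$ is pinned down these reductions are mechanical.
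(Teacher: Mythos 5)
Your proposal follows the paper's proof exactly: the paper's own argument for this theorem is precisely ``\cite[Proposition 8]{CL} $+$ the even construction of $L=A_{\tau,\frac\pi4}B_{\tau,\frac\pi4}$ $+$ Theorem \ref{es-theorem} $+$ Theorem A,'' with all the bookkeeping you supply left implicit, following the template of the truncated-exponential theorem. Two of the details you add, however, need comment.

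First, the value $L(0)$. You are right that $L(0)>0$ (not merely $L(0)\neq 0$, which is all the paper records) is what \cite[Proposition 8]{CL} requires, and that the orientation of the final identity depends on it: if one had $L(0)<0$, the displayed right-hand side would be the \emph{negative} of the error. Your conclusion $L(0)>0$ is correct, but be aware that it contradicts \eqref{AB-even} as printed: with the minus sign there, and $E_{\tau,0}(0)=U(0)\in\R$, one gets $L(0)=-\tfrac12 E_{\tau,0}(0)^2<0$. A direct computation, $A_{\tau,\frac\pi4}B_{\tau,\frac\pi4}=\tfrac{i}{4}\bigl(E_{\tau,\frac\pi4}^2-(E_{\tau,\frac\pi4}^*)^2\bigr)=\tfrac14\bigl(E_{\tau,0}^2+(E_{\tau,0}^*)^2\bigr)$, shows that \eqref{AB-even} should carry a plus sign, so your claim stands; but in a final write-up you should state explicitly that you are correcting the printed identity rather than quoting it, since the two versions lead to opposite conclusions about the sign of the theorem's right-hand side.

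Second, your justification that $e^{-\lambda|x|}\in L^1(\mu_M)$ --- ``the at-most-subexponential growth of $1/M$ forced by $U$ being a real-zero-free Cartwright-class function'' --- is not a valid principle and this step, as you argue it, would fail. The Cartwright condition controls $\int\log^+|U(x)|(1+x^2)^{-1}dx$, i.e.\ where $|U|$ is \emph{large}; it places no pointwise lower bound on $|U|$ along $\R$. Zeros of $U$ hugging the real axis (say at $2^n-i\epsilon_n$ with $\epsilon_n$ super-exponentially small) produce an even $M=UU^*\in\mc{C}^+(0)$, positive on $\R$, for which $1/M$ has spikes exceeding every $e^{\lambda|x|}$, so that $e^{-\lambda|x|}\notin L^1(\mu_M)$. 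This integrability is genuinely needed in the argument --- both to conclude $I^\#(L,\lambda,\cdot)\in\mc{B}_1(\mu_M,2\tau)$ (your pointwise error bound only shows the \emph{error} is $\mu_M$-integrable) and to invoke Theorem A, which requires $f\in L^1(\mu)$. The paper is silent on this point as well, so it is a gap you share with the paper rather than a divergence from its proof; but your proposed reason for it is false and should be replaced by an explicit hypothesis or a restriction on $M$. A last, minor point: ``real entire with only real simple zeros'' is not by itself the Laguerre--P\'olya property; you need the Hadamard form \eqref{hadamardproduct} with $a\ge 0$, which $A$ and $B$ do have here because they inherit exponential type and the Cartwright condition from $E$.
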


%%
%% Gaussian
%%

\subsection{Approximation to the Gaussian} Let $M(x)$ be even. As is shown in \cite{CL2}, an interpolation to the Gaussian can be obtained in the following way. For an even LP function $L$, the Hadamard factorization may be used to construct an LP function $G_L$ (of exponential type zero) with
\[
G_L(z^2) = L(z).
\]

Similar to the proof of Lemma \ref{AB-properties} it can be shown that $L(x) (I(L,\lambda,x) - x_+^0 e^{-\lambda x})\ge 0$ for all positive $x$. Applying this with $G_L$ instead of $L$ and substituting $x^2$ for $x$, it follows that
\[
G_L(x) (I(G_L,\lambda, x^2) - e^{-\lambda x^2}) \ge 0
\]
for all real $x$. Properties of $I(G_L,\lambda,z^2)$ analogous to the properties of $I^\circ(L,\lambda,z)$   are proved in \cite{CL2}. They imply the following statement.

\begin{theorem} Let  $M\in\mc{C}^+(2\tau_0)$ be even, and let $\tau\ge \tau_0$.  Then for all $F\in \mc{B}_1(\mu_M, 2\tau)$
\[
\int_\R |F(x) - e^{-\lambda x^2} |d\mu_M(x) \ge \int_\R e^{-\lambda x^2} {\rm sgn}(A_{\tau,\frac\pi4}(x) B_{\tau,\frac\pi4}(x)) d\mu_M(x)
\]
with equality if $F(z) = I( G_{A_{\tau,\frac\pi4} B_{\tau,\frac\pi4}}, \lambda, z^2)$. 
\end{theorem}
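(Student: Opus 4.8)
The plan is to follow the template already used for the truncated exponential and for $e^{-\lambda|x|}$: exhibit the candidate approximant through the interpolation construction, check it lies in $\mc{B}_1(\mu_M,2\tau)$ with error of one sign relative to $A_{\tau,\frac\pi4}B_{\tau,\frac\pi4}$, evaluate the extremal integral using Theorem \ref{es-theorem}, and finish with Theorem A. Write $\mu=\mu_M$ and $L=A_{\tau,\frac\pi4}B_{\tau,\frac\pi4}$. Since $M$ is even, the construction for even measures in Section \ref{LP-section} makes $L$ even; as $E_{\tau,\frac\pi4}$ has exponential type $\tau$, the real entire parts $A_{\tau,\frac\pi4},B_{\tau,\frac\pi4}$ have type $\tau$, so $L$ has exponential type $2\tau$. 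By Section \ref{dBspace} the zeros of $L$ are real and simple, and evenness forces $L(0)\ne 0$; thus $L$ is an even Laguerre-P\'olya function with $L(0)\ne 0$, and the substitution $G_L(z^2)=L(z)$ yields an LP function $G_L$ of exponential type $0$. Set $F(z)=I(G_L,\lambda,z^2)$.

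First I would verify $F\in\mc{B}_1(\mu,2\tau)$. The bound \eqref{complex-growth} applied to $G_L$ gives $|I(G_L,\lambda,z^2)|\le c(1+|G_L(z^2)|)=c(1+|L(z)|)$, so $F$ inherits the exponential type $2\tau$ of $L$. For integrability I would use the properties of $I(G_L,\lambda,z^2)$ recorded in \cite{CL2}, which serve as the analogue of Lemma \ref{AB-properties}: $F$ interpolates $e^{-\lambda x^2}$ at the real zeros of $L$, the product $G_L(x^2)\bigl(F(x)-e^{-\lambda x^2}\bigr)$ is of one sign on $\R$, and $|F(x)-e^{-\lambda x^2}|\le C|L(x)|/(1+x^2)$ for all real $x$, in analogy with \eqref{B-integrable}. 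Because $L/M=A_{\tau,\frac\pi4}B_{\tau,\frac\pi4}/|E_{\tau,\frac\pi4}|^2$ is bounded on $\R$ (Section \ref{dBspace}), the last estimate gives $\int_\R|F-e^{-\lambda x^2}|\,d\mu<\infty$; since $e^{-\lambda x^2}\in L^1(\mu)$ (the Gaussian decay dominates the growth of $1/M$), we conclude $F\in L^1(\mu)$ and hence $F\in\mc{B}_1(\mu,2\tau)$.

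Next I would read off the error. Because $G_L(x^2)=L(x)$, the one-sign statement says $F(x)-e^{-\lambda x^2}$ has constant sign relative to $A_{\tau,\frac\pi4}(x)B_{\tau,\frac\pi4}(x)$, and with the interpolation at the zeros of $L$ this identifies $\overline{{\rm sgn}}\bigl(e^{-\lambda x^2}-F(x)\bigr)$ with ${\rm sgn}\bigl(A_{\tau,\frac\pi4}(x)B_{\tau,\frac\pi4}(x)\bigr)$, which belongs to $\mc{S}(\mu,2\tau)$ by Theorem \ref{es-theorem}. Writing $|F-e^{-\lambda x^2}|={\rm sgn}(A_{\tau,\frac\pi4}B_{\tau,\frac\pi4})\,(e^{-\lambda x^2}-F)$ and integrating against $\mu$, the term $\int_\R{\rm sgn}(A_{\tau,\frac\pi4}B_{\tau,\frac\pi4})\,F\,d\mu$ vanishes by the defining relation \eqref{intro-es} of the extremal signature, leaving precisely $\int_\R e^{-\lambda x^2}{\rm sgn}(A_{\tau,\frac\pi4}(x)B_{\tau,\frac\pi4}(x))\,d\mu$. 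This establishes the asserted equality for $F=I(G_L,\lambda,z^2)$, and since the associated signature lies in $\mc{S}(\mu,2\tau)$, Theorem A upgrades the equality to the lower bound for every $F\in\mc{B}_1(\mu,2\tau)$.

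The main obstacle is the input borrowed from \cite{CL2}: establishing, for the even type-zero function $G_L$, the three properties of $I(G_L,\lambda,z^2)$ used above, namely interpolation at the zeros of $L$, the one-sided sign of $G_L(x^2)\bigl(F-e^{-\lambda x^2}\bigr)$, and the quantitative error bound $|F-e^{-\lambda x^2}|\le C|L|/(1+x^2)$. This is the Gaussian analogue of Lemma \ref{AB-properties}, but genuinely more delicate: one must first obtain $G_L(x)\bigl(I(G_L,\lambda,x)-e^{-\lambda x}\bigr)$ of one sign for $x>0$ by the total-positivity (Schoenberg) argument of Lemma \ref{AB-properties} applied to $G_L$, then transport the statement to all of $\R$ through the substitution $x\mapsto x^2$ while controlling the error uniformly. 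A secondary point requiring care is fixing the global sign in the one-sided inequality so that $\int_\R e^{-\lambda x^2}{\rm sgn}(A_{\tau,\frac\pi4}B_{\tau,\frac\pi4})\,d\mu$ is nonnegative, as it must be since it equals the norm $\|F-e^{-\lambda x^2}\|_{L^1(\mu)}$.
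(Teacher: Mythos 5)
Your proposal follows essentially the same route as the paper's own (very terse) proof: make $L=A_{\tau,\frac\pi4}B_{\tau,\frac\pi4}$ even via the even-measure construction, pass to $G_L$ with $G_L(z^2)=L(z)$, import the interpolation/sign/error properties of $I(G_L,\lambda,z^2)$ from \cite{CL2}, and combine the one-signed, $\mu_M$-integrable error with Theorem \ref{es-theorem} and Theorem A. The paper merely states these ingredients and asserts that "they imply the statement," so your extra verifications (exponential type via \eqref{complex-growth}, integrability via boundedness of $L/M$ on $\R$, and the sign normalization, together with the implicit assumption $e^{-\lambda x^2}\in L^1(\mu_M)$ that the paper also makes without comment) simply fill in what the paper leaves implicit.
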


%%
%% Poisson kernel
%%

\subsection{Interpolation of the Poisson kernel}

For $\lambda>0$ we define the Poisson kernel $P_\lambda$ and conjugate Poisson kernel $Q_\lambda$ by
\begin{align*}
P_\lambda(z) &= \frac{\lambda}{\pi(z^2+\lambda^2)},\\
Q_\lambda(z)&= \lambda^{-1} z P_\lambda(z).
\end{align*}

Due to the fact that $P_\lambda$ and $Q_\lambda$ are already meromorphic, the best approximations and the error representations have particularly simple expressions. Let $E$ be an entire function with $E(i\lambda)\neq 0$. (This is certainly satisfied if $E$ is HB.) We define $K_{\lambda, E}$ and $L_{\lambda, E}$ by
\begin{align}\label{K-def}
\begin{split}
K_{\lambda,E}(z) &= P_\lambda(z) \left(1- \frac{E(z)^2+E^*(z)^2}{E(i\lambda)^2+E^*(i\lambda)^2}\right),\\
L_{\lambda,E}(z)&= \lambda^{-1} P_\lambda(z)\left(z-i\lambda\frac{E(z)^2- E^*(z)^2}{E(i\lambda)^2 - E^*(i\lambda)^2}\right)
\end{split}
\end{align}
where $z\in\C$. If $E(-z) = E^*(z)$ for all $z\in\C$, then evidently $K_{\lambda, E}$ and $L_{\lambda,E}$ are entire functions. If $E$ has exponential type $\tau$, then $K_{\lambda, E}$ and $L_{\lambda,E}$ have exponential type $2\tau$. We emphasize that in the first part of the following theorem the interpolation is obtained by taking $E = E_{\tau,0}$, while the extremal signature being used is ${\rm sgn}(A_{\tau,\frac\pi4} B_{\tau,\frac\pi4})$. 

\begin{theorem}\label{poisson-approx} Let $M\in\mc{C}^+(2\tau_0)$, and let $\tau\ge \tau_0$. Assume that $E_{\tau,0}(-z) = E_{\tau,0}^*(z)$. Then the following statements hold.
\begin{enumerate}
\item For all $F\in  \mc{B}_1(\mu_M, 2\tau)$
\[
\int_\R |P_\lambda(x) - F(x)| d\mu_M(x) \ge \frac{4}{\pi E_{\tau,0}(i\lambda) E_{\tau,0}(-i\lambda)} \arctan\left(\frac{E_{\tau,0}(-i\lambda)}{E_{\tau,0}(i\lambda)}\right) 
\]
with equality if $F = K_{\lambda, E_{\tau,0}}$. 
\item  For all $F\in  \mc{B}_1(\mu_M, 2\tau)$
\[
\int_\R |Q_\lambda(x) - F(x)|d\mu_M(x) \ge \frac{4}{\pi E_{\tau,0}(i\lambda) E_{\tau,0}(-i\lambda)} {\rm arctanh}\left(\frac{E_{\tau,0}(-i\lambda)}{E_{\tau,0}(i\lambda)}\right),
\]
with equality if $F = L_{\lambda, E_{\tau,0}}$. 
\end{enumerate}
\end{theorem}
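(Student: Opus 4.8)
The plan is to reduce both parts to the extremal-signature machinery of Theorem \ref{es-theorem} together with a single residue evaluation. Throughout set $E = E_{\tau,0}$, $\mu = \mu_M$, write $E = A - iB$ with $A = A_{\tau,0}$, $B = B_{\tau,0}$, and let $\varphi$ be the phase of $E$. The hypothesis $E(-z) = E^*(z)$ forces $E(i\lambda)$ and $E(-i\lambda) = E^*(i\lambda)$ to be real, and since $E$ is Hermite--Biehler the quantity $r := E(-i\lambda)/E(i\lambda)$ satisfies $|r| < 1$. First I would record the exact error functions. A direct subtraction in \eqref{K-def} gives
\[
P_\lambda - K_{\lambda,E} = P_\lambda\,\frac{E^2 + (E^*)^2}{E(i\lambda)^2 + E^*(i\lambda)^2}, \qquad Q_\lambda - L_{\lambda,E} = P_\lambda\,\frac{i\bigl(E^2 - (E^*)^2\bigr)}{E(i\lambda)^2 - E^*(i\lambda)^2}.
\]
The normalizing constants are precisely what cancel the two simple poles of $P_\lambda$ at $\pm i\lambda$, so $K_{\lambda,E}$ and $L_{\lambda,E}$ are entire of exponential type $2\tau$.

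Next I would identify the signs on $\R$. Using $E^*/E = e^{2i\varphi}$ from \eqref{EE*-rep} one has $E^2 + (E^*)^2 = 2|E|^2\cos 2\varphi$ and $i\bigl(E^2 - (E^*)^2\bigr) = 2|E|^2\sin 2\varphi = 4AB$. Reality and the Hermite--Biehler inequality make both normalizing constants positive, since $E(i\lambda)^2 \pm E^*(i\lambda)^2 = E(i\lambda)^2(1 \pm r^2)$, and $P_\lambda > 0$ on $\R$. Hence ${\rm sgn}(P_\lambda - K_{\lambda,E}) = {\rm sgn}(\cos 2\varphi)$, which agrees up to sign with ${\rm sgn}(A_{\tau,\frac\pi4}B_{\tau,\frac\pi4})$ by \eqref{AB-even}, and ${\rm sgn}(Q_\lambda - L_{\lambda,E}) = {\rm sgn}(\sin 2\varphi) = {\rm sgn}(A_{\tau,0}B_{\tau,0})$; both are extremal signatures for $\mc{B}_1(\mu,2\tau)$ by Theorem \ref{es-theorem}. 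Because the bracketed factors are bounded on $\R$ and $P_\lambda$ decays like $x^{-2}$, both error functions lie in $L^1(\mu)$. The best-approximation inequalities then follow without assuming $P_\lambda$ or $Q_\lambda$ is itself in $L^1(\mu)$: writing $\psi$ for the relevant signature, for any competitor $G \in \mc{B}_1(\mu,2\tau)$ with $P_\lambda - G \in L^1(\mu)$ the difference $K_{\lambda,E} - G$ again lies in $\mc{B}_1(\mu,2\tau)$ and is annihilated by $\psi$, so $\int(P_\lambda - G)\psi\,d\mu = \int(P_\lambda - K_{\lambda,E})\psi\,d\mu = \int|P_\lambda - K_{\lambda,E}|\,d\mu$, whence $|\psi|=1$ gives $\int|P_\lambda - G|\,d\mu \ge \int|P_\lambda - K_{\lambda,E}|\,d\mu$ (and trivially when $P_\lambda - G \notin L^1(\mu)$). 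The same reasoning handles $Q_\lambda$.

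It remains to evaluate the error in closed form. I would expand the signature by its Fourier series as in the proof of Theorem \ref{es-theorem}, namely ${\rm sgn}(\cos 2\varphi) = \Re\bigl[\tfrac{4}{\pi}\sum_{k\ge 0}\tfrac{(-1)^k}{2k+1}(E^*/E)^{2k+1}\bigr]$, use the uniform boundedness of the partial sums together with dominated convergence (the dominating function $C|P_\lambda - K_{\lambda,E}|$ lies in $L^1(\mu)$) to integrate term by term, and evaluate each term
\[
\int_\R (P_\lambda - K_{\lambda,E})(x)\,\frac{(E^*(x)/E(x))^{2k+1}}{|E(x)|^2}\,dx
\]
by closing the contour in the upper half-plane. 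The integrand is $P_\lambda$ times a function that is bounded and analytic on $\C^+$ (powers of $E^*/E \in H^\infty(\C^+)$ divided by $E^2$), so only the residue at $z = i\lambda$ survives; together with the cancellation of $1+r^2$ against $E(i\lambda)^2 + E^*(i\lambda)^2$ the term equals $r^{2k}/E(i\lambda)^2$. Summing $\sum_{k\ge 0}(-1)^k r^{2k}/(2k+1) = r^{-1}\arctan r$ yields part (1). For part (2) I would run the identical argument with the odd signature ${\rm sgn}(\sin 2\varphi)$, whose series carries no factor $(-1)^k$ and an extra $1/i$; the additional factor $z$ in $Q_\lambda$ shifts the residue so that the $k$-th term becomes $i\,r^{2k}/E(i\lambda)^2$, and $\sum_{k\ge 0}r^{2k}/(2k+1) = r^{-1}\,{\rm arctanh}\,r$ produces the stated formula.

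The main obstacle is the residue evaluation and its bookkeeping rather than any conceptual difficulty. Two points need care: justifying that only the pole at $i\lambda$ contributes (resting on the decay of $P_\lambda$ on the large semicircle and on $E^*/E$ being bounded and $1/E^2$ small on $\C^+$, so the arc integral vanishes), and the termwise interchange, for which the uniform boundedness of the square-wave partial sums together with $P_\lambda - K_{\lambda,E},\,Q_\lambda - L_{\lambda,E} \in L^1(\mu)$ is exactly what is required. The reason the answers collapse to the elementary $\arctan$ and ${\rm arctanh}$ is the algebraic cancellation of the factors $1 \pm r^2$ arising from $E^2 \pm (E^*)^2$ against the normalizing constants $E(i\lambda)^2 \pm E^*(i\lambda)^2 = E(i\lambda)^2(1 \pm r^2)$; and the Hermite--Biehler hypothesis, through $|r| < 1$, is simultaneously what guarantees convergence of both power series.
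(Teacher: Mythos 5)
Your proposal is correct and follows essentially the same route as the paper's proof: the same error identities extracted from \eqref{K-def}, the sign identification via \eqref{AB-even} and the phase, the square-wave Fourier expansion of the signature, and termwise residue evaluation as in \eqref{residue} summed to $\arctan$ and ${\rm arctanh}$ --- your bookkeeping of the $1\pm r^2$ cancellation, and your handling of competitors when $P_\lambda\notin L^1(\mu_M)$, are in fact more explicit than the paper's. One wording fix: the factor multiplying $P_\lambda$ in $P_\lambda-K_{\lambda,E}$ equals $2M\cos 2\varphi$ divided by a positive constant, so it need not be bounded on $\R$; it is bounded only after dividing by $M$, which is exactly what $d\mu_M=dx/M$ supplies, so your $L^1(\mu_M)$ conclusion stands.
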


\begin{proof} The identity $E_{\tau,0}(-z) = E_{\tau,0}^*(z)$ implies that $E(i\R)\subseteq \R$. Hence $E_{\tau,0}(i\lambda)^2+E_{\tau,0}^*(i\lambda)^2\ge 0$. It follows from \eqref{AB-even} and \eqref{K-def} that
\[
A_{\tau,\frac\pi4}(x)B_{\tau,\frac\pi4}(x)( K_{\lambda, E_{\tau,0}}(x) - P_\lambda(x)) \ge 0
\]
for all real $x$. Integrability of $K_{\lambda,E_{\tau,0}} - P_\lambda$ with respect to $\mu_M$ follows from \eqref{K-def} as well as the fact that $K_{\lambda, E_{\tau,0}}$ has exponential type $2\tau$. It remains to show that
\[
\int_\R |K_{\lambda, E_{\tau,0}}(x) - P_\lambda(x)| d\mu_M(x) =  \frac{4}{\pi E_{\tau,0}(i\lambda) E_{\tau,0}(-i\lambda)} \arctan\left(\frac{E_{\tau,0}(-i\lambda)}{E_{\tau,0}(i\lambda)}\right).
\]

Since $E_{\tau,0}^*/E_{\tau,0}$ is bounded by $1$ in the upper half plane and its reciprocal is bounded in the lower half plane, an application of the residue theorem gives
\begin{align}\label{residue}
\int_\R \left(\frac{E^*_\tau(x)}{E_{\tau,0}(x)}\right)^k \frac{dx}{x^2+\lambda^2} = \begin{cases}
\displaystyle \pi\lambda^{-1} E_{\tau,0}(i\lambda)^{-k}E_{\tau,0}^*(i\lambda)^k&\text{ if }k\ge 0,\\
\displaystyle \pi\lambda^{-1} E_{\tau,0}(-i\lambda)^{-k} E_{\tau,0}^*(-i\lambda)^k & \text{ if }k<0.
\end{cases}
\end{align}

We note that
\[
{\rm sgn} (A_{\tau,\frac\pi4}(x) B_{\tau,\frac\pi4}(x)) = {\rm sgn}\cos 2\varphi_{\tau,0}(x)
\]
where $\varphi_{\tau,0}$ is the phase of $E_{\tau,0}$. The Fourier series of the sign of $\cos y$ gives
\[
 {\rm sgn}\cos 2\varphi_{\tau,0}(x) = \frac2\pi \sum_{n\in\Z} \frac{(-1)^n}{2n+1} \left(\frac{E_\tau^*(x)}{E_\tau(x)}\right)^{2n+1}.
\]

Multiplication by $K_{\lambda, E_\tau}(x) - P_\lambda(x)$, integration against $dx/(E_{\tau,0}(x) E_{\tau,0}^*(x))$ and multiple applications of \eqref{residue} give the evaluation. The proof for the conjugate Poisson kernel is analogous using the fact that $E_{\tau,0}^2 - (E_{\tau,0}^*)^2 = -4iA_{\tau,0} B_{\tau,0}$. 
\end{proof}

%%
%%  Sign changes of signatures?
%%

%%
%%  Open problems
%%

\section{Open problems}

There are two questions that are immediately suggested by the results of the previous sections. 
\begin{enumerate}
\item Find an effective characterization of all extremal signatures for $dx/M(x)$ where $M(x)$ is as in Theorem \ref{es-theorem}.
\item Extend the results of Theorem \ref{es-theorem} to more general measures.
\end{enumerate}

 In the case of Lebesgue measure, an explicit parametrization of all extremal signatures was found by B.\ Logan in his thesis \cite{Lo}. Starting from the fact that $\psi \in \mc{A}(dx, \tau)$ is equivalent to 
\[
|P_y*\psi(x)|\le A e^{-\tau y}
\]
for some $A>0$, all real $x$, and $y>0$, Logan showed that $h\circ \psi\in \mc{A}(dx,\tau)$ where $h$ is a periodic high pass function, and $\psi$ is essentially the logarithm of an inner function with exponential decay in the upper half plan and applied this with $h(x) = {\rm sgn} \sin \tau x$ to obtain his result. A full account of his argument can be found in  \cite[Chapter 7.6]{Sh}. For non-constant $M$ there are not even conjectures with regards to the correct formulation.

This type of representation would be useful to find best approximations to functions with more than one discontinuity, e.g., characteristic functions of intervals on the real line. 

Regarding the second question, it is frequently possible to find descriptions for high pass functions for a given measure, but these descriptions do not lend themselves to investigations of $\pm1$ functions. To give a simple example, consider the measure $x^2dx$. Define $\psi$ to be of the form
\[
\psi(x) = aj_0(x) +b j_1(x) + \psi_0(x)
\] 
where $j_0$ and $j_1$ are the spherical Bessel functions $j_0(x) = \sin x/x$ and $j_1(x) = (\sin x - x\cos x)/x^2$, $a,b\in\C$, and $\psi_0$ is any element in $\mc{A}(dx,1)$. It is easy to prove that $\int_\R F(x) j_0(x) x^2 dx =0$ and $\int_\R F(x) j_1(x) x^2 dx=0$ for every $F\in \mc{B}_1(x^2dx, 1)$, and hence $\psi\in \mc{A}(x^2 dx,1)$, but the additive structure of this representation is not well suited to investigate functions of absolute value $1$.

\end{document}